\documentclass[10pt]{article}
\usepackage{amsmath}
\usepackage{amssymb}
\usepackage{amsthm}
\usepackage{amsfonts}

\begin{document}

\def\D{\displaystyle}
\newtheorem{theorem}{Theorem}[section]
\newtheorem{definition}[theorem]{Definition}
\newtheorem{lemma}[theorem]{Lemma}
\newtheorem{algorithm}[theorem]{Algorithm}
\newtheorem{example}[theorem]{Example}
\newtheorem{proposition}[theorem]{Proposition}
\renewcommand{\theequation}{\thesection. \arabic{equation}}

\begin{center}
\large {\bf Multivariate Difference-Differential Dimension Polynomials and New Invariants of Difference-Differential Field Extensions }
\normalsize
\smallskip

Alexander  Levin

The Catholic University of America

 Washington, D. C.  20064

levin@cua.edu

http://faculty.cua.edu/levin

\end{center}

\begin{abstract}
In this paper we introduce a method of characteristic sets with respect to several term orderings for difference-differential polynomials.  Using this technique, we obtain a method of computation of multivariate dimension polynomials of finitely generated difference-differential field extensions. Furthermore, we find new invariants of such extensions and show how the computation of multivariate difference-differential polynomials is applied to the equivalence problem for systems of algebraic difference-differential equations.
\end{abstract}

Keywords: \,Difference-differential field, dimension polynomial, reduction, characteristic set.

Mathematics Subject Classification 2010: 12H05, 12H10.

\section{Introduction}
The role of Hilbert polynomials in commutative and homological algebra as well as in algebraic geometry and combinatorics is well known. A similar role
in differential algebra is played by differential dimension polynomials, which describe in exact terms the freedom degree of a dynamic system,
as well as the number of arbitrary constants in the general solution of a system of partial algebraic differential equations. The notion of a differential dimension polynomial was introduced by E. Kolchin \cite{K1} who proved the following fundamental result.

\begin{theorem} Let $K$ be a differential field of zero characteristic with basic derivations $\delta_{1},\dots, \delta_{m}$. Let $\Theta$ denote the free commutative semigroup generated by $\delta_{1},\dots, \delta_{m}$, and for any $r\in {\bf N}$, let $\Theta(r) = \{\theta = \delta_{1}^{k_{1}}\dots \delta_{m}^{k_{m}}\in\Theta \,|\,\sum_{i=1}^{m}k_{i}\leq r\}$. Furthermore, let $L = K\langle \eta_{1},\dots,\eta_{n}\rangle$ be a differential field extension of $K$ generated by a finite set $\eta = \{\eta_{1}, \dots , \eta_{n}\}$.
Then there exists a polynomial $\omega_{\eta|K}(t)\in {\bf Q}[t]$ such that $\omega_{\eta|K}(r) = trdeg_{K}K(\{\theta \eta_{j} | \theta \in \Theta(r), \,1\leq j\leq n\})$ for all sufficiently large $r\in {\bf Z}$. The degree of this polynomial does not exceed $m$ and the numbers  $d =  \deg \omega_{\eta|K}$,\, $a_{m}$ and $a_{d}$ do not depend on the choice of the system of differential generators $\eta$ of the extension $L/K$.  Moreover, $a_{m}$ is equal to the differential transcendence degree of $L$ over $K$, that is, to the maximal number of elements $\xi_{1},\dots,\xi_{k}\in L$ such that the set $\{\theta \xi_{i} | \theta \in \Theta, 1\leq i\leq k\}$ is algebraically independent over $K$.
\end{theorem}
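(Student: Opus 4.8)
The plan is to realize $L$ inside the quotient field of a ring of differential polynomials and reduce the computation of transcendence degrees to a combinatorial count governed by a characteristic set. First I would form the ring of differential polynomials $K\{y_{1},\dots,y_{n}\}$ and let $P$ be the kernel of the differential $K$-homomorphism sending $y_{j}\mapsto\eta_{j}$; this $P$ is a prime differential ideal and $L$ is the quotient field of $K\{y_{1},\dots,y_{n}\}/P$. Fix an orderly ranking on the set of derivatives $\{\theta y_{j} : \theta\in\Theta,\ 1\le j\le n\}$ (one in which $\mathrm{ord}(\theta) < \mathrm{ord}(\theta')$ forces $\theta y_{j}$ to rank lower than $\theta' y_{k}$), and choose a characteristic set $\mathcal{A}=\{A_{1},\dots,A_{p}\}$ of $P$ relative to this ranking. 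Let $u_{1},\dots,u_{p}$ be the leaders of $A_{1},\dots,A_{p}$.

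Next I would identify, for each $r$, the transcendence basis explicitly. Call a derivative $\theta y_{j}$ \emph{led} if it is a derivative (proper or improper) of some leader $u_{i}$, and \emph{free} otherwise. The core claim is that the images in $L$ of the free derivatives $\theta\eta_{j}$ with $\mathrm{ord}(\theta)\le r$ form a transcendence basis of $K(\{\theta\eta_{j} : \theta\in\Theta(r)\})$ over $K$. Algebraic dependence of the led derivatives on the free ones of no greater order follows by reducing the $A_{i}$ and their derivatives: since the separants and initials of a characteristic set of a prime ideal do not lie in $P$, they are nonzero at $\eta$, so each $\theta A_{i}$ yields a genuine algebraic relation expressing the highest led derivative it contains over lower-ranked derivatives. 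Algebraic independence of the free derivatives follows from the characterization that an element of $K\{y_{1},\dots,y_{n}\}$ lies in $P$ if and only if it reduces to zero modulo $\mathcal{A}$: any algebraic relation among free derivatives would be a nonzero reduced polynomial in $P$, a contradiction. Hence $\mathrm{trdeg}_{K} K(\{\theta\eta_{j} : \theta\in\Theta(r)\})$ equals the number of free derivatives of order at most $r$.

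Now I would convert this count into a polynomial. Encoding each derivative $\delta_{1}^{k_{1}}\cdots\delta_{m}^{k_{m}} y_{j}$ as the point $(k_{1},\dots,k_{m})$ in the $j$-th copy of ${\bf N}^{m}$, the leaders determine a finite subset $E_{j}\subseteq{\bf N}^{m}$ for each $j$, and the free derivatives of order $\le r$ in class $j$ are exactly the points of $\{(k_{1},\dots,k_{m}) : \sum_{i} k_{i}\le r\}$ that dominate (componentwise) no element of $E_{j}$. Kolchin's theorem on numerical polynomials associated with finite subsets of ${\bf N}^{m}$ shows that each such count agrees with a polynomial in $r$ of degree $\le m$ for all large $r$; summing over $j$ produces the desired $\omega_{\eta|K}(t)\in{\bf Q}[t]$ of degree at most $m$. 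Writing $\omega_{\eta|K}(t)=\sum_{i=0}^{m}a_{i}\binom{t+i}{i}$, the coefficient $a_{m}$ counts precisely those classes $j$ with $E_{j}=\varnothing$, i.e.\ those $\eta_{j}$ no derivative of which is led; for such $j$ the entire orbit $\{\theta\eta_{j}\}$ is algebraically independent over $K$ and over the remaining classes, so $a_{m}$ equals the differential transcendence degree of $L/K$, an intrinsic quantity and hence independent of the choice of $\eta$.

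The remaining obstacle is the invariance of $d=\deg\omega_{\eta|K}$ and of the leading coefficient $a_{d}$, which I would settle by a squeeze argument rather than through the characteristic set. If $\zeta=\{\zeta_{1},\dots,\zeta_{s}\}$ is a second finite system of differential generators of $L/K$, then each $\zeta_{i}$ lies in $K(\{\theta\eta_{j} : \mathrm{ord}(\theta)\le h\})$ for some fixed $h$, whence $\theta\zeta_{i}$ is algebraic over $K(\{\theta'\eta_{j} : \mathrm{ord}(\theta')\le r+h\})$ for every $\theta$ of order $\le r$; this gives $\omega_{\zeta|K}(r)\le\omega_{\eta|K}(r+h)$, and symmetrically $\omega_{\eta|K}(r)\le\omega_{\zeta|K}(r+h')$. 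Because $\omega_{\eta|K}(t+h)-\omega_{\eta|K}(t)$ has degree one less than $\omega_{\eta|K}$, these two inequalities force $\deg\omega_{\zeta|K}=\deg\omega_{\eta|K}=d$ together with equality of the top coefficients $a_{d}$. I expect the delicate point throughout to be the transcendence-basis claim of the second step, since it is exactly there that the full strength of the characteristic-set machinery---membership-by-reduction and nonvanishing of separants and initials at $\eta$---must be invoked.
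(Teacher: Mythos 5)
Your proposal is correct and is essentially the approach the paper itself takes: this classical theorem of Kolchin is only cited in the paper, but the paper's proof of its generalization (Theorem 3.1) proceeds exactly as you describe --- pass to the defining prime ideal $P$ of $\eta$ in $K\{y_{1},\dots,y_{n}\}$, take a characteristic set, show that the terms of order at most $r$ that are not multiples of the leaders give a transcendence basis (independence from ``reduced and in $P$ implies zero'', dependence from the nonvanishing of initials and separants at $\eta$ plus induction on the ranking), count these terms by the numerical polynomial of a subset of ${\bf N}^{m}$, and derive the invariance of $d$ and $a_{d}$ from the squeeze $\omega_{\zeta|K}(r)\leq\omega_{\eta|K}(r+h)$. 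The one step worth tightening is the identification of $a_{m}$ with the differential transcendence degree: your observation that the classes with $E_{j}=\emptyset$ remain algebraically independent gives only the inequality $a_{m}\leq$ (differential transcendence degree), and the reverse inequality requires applying the same squeeze comparison to $\eta$ augmented by an arbitrary differentially independent family $\xi_{1},\dots,\xi_{k}$, which forces $k\leq a_{m}$ by comparing the coefficients of $\binom{t+m}{m}$.
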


The polynomial $\omega_{\eta|K}(t)$ is called the {\em differential dimension polynomial} of the extension $L/K$ associated with the set of differential generators $\eta$.

If $P$ is a prime differential ideal of a finitely generated differential algebra $R = K\{\zeta,\dots, \zeta_{n}\}$ over a differential field $K$, then the quotient field  of $R/P$ is a differential field extension of $K$ generated by the images of $\zeta_{i}$ ($1\leq i\leq n$) in $R/P$. The corresponding differential dimension polynomial, therefore, characterizes the ideal $P$; it is denoted by $\omega_{P}(t)$.  Assigning such polynomials to prime differential ideals has led to a number of new results on the Krull-type dimension of differential algebras and dimension of differential varieties (see, for example, \cite{Johnson1}, \cite{Johnson2} and \cite{Johnson3}). Furthermore, as it was shown by A. Mikhalev and E. Pankratev \cite{MP1}, one can naturally assign a differential dimension polynomial to a system of algebraic differential equations and this polynomial expresses the A. Einstein's strength of the system (see \cite{E}). Methods of computation of (univariate) differential dimension polynomials and the strength of systems of differential equations via the Ritt-Kolchin technique of characteristic sets can be found, for example, in \cite{MP2} and \cite[Chapters 5, 9]{KLMP}. Note also, that there are quite many works on computation of dimension polynomials of differential, difference and difference-differential modules with the use of various generalizations of the Gr\"obner basis method (see, for example, \cite[Chapters V - XI]{KLMP}, \cite{Levin1}, \cite{Levin2}, \cite{Levin3}, \cite[Chapter 3]{Levin4}, and \cite{ZW}).

In this paper we develop a method of characteristic sets with respect to several orderings for algebras of difference-differential polynomials over a difference-differential fields whose basic set of derivations is partied into several disjoint subsets. We apply this method to prove the existence, outline a method of computation, and determine invariants of a multivariate dimension polynomial associated with a finite system of generators of a difference-differential field extension (and a partition of the basic sets of derivations).  We also show that most of these invariants are not carried by univariate dimension polynomials and show how the consideration of the new invariants can be applied to the isomorphism problem for difference-differential field extensions and equivalence problem for systems of algebraic difference-differential equations.

\section{Preliminaries}

Throughout the paper, ${\bf N}, {\bf Z}$, ${\bf Q}$, and ${\bf R}$ denote the sets
of all non-negative integers, integers, rational numbers, and real numbers, respectively.
${\bf Q}[t]$ will denote the ring of polynomials in one variable $t$
with rational coefficients.  

By a {\em difference-differential ring} we mean a commutative ring $R$ together with finite sets $\Delta = \{\delta_{1},\dots, \delta_{m}\}$ and $\sigma = \{\alpha_{1},\dots, \alpha_{n}\}$ of derivations and automorphisms of $R$, respectively, such that any two mappings of the set $\Delta\bigcup\sigma$ commute.  The set $\Delta\bigcup\sigma$ is called the {\em basic set\/} of the difference-differential ring $R$, which is also called a $\Delta$-$\sigma$-ring. If $R$ is a field, it is called a {\em difference-differential field} or a $\Delta$-$\sigma$-field. Furthermore, in what follows, we denote the set $\{\alpha_{1},\dots, \alpha_{n}, \alpha^{-1}_{1},\dots, \alpha^{-1}_{n}\}$ by $\sigma^{\ast}$.

If $R$ is a difference-differential ring with a basic set $\Delta\bigcup\sigma$ described above, then
$\Lambda$ will denote the free commutative semigroup of all power products of the form $\lambda = \delta_{1}^{k_{1}}\dots \delta_{m}^{k_{m}}\alpha_{1}^{l_{1}}\dots \alpha_{n}^{l_{n}}$ where $k_{i}\in {\bf N},\, l_{j}\in {\bf Z}$ ($1\leq i\leq m,\, 1\leq j\leq n$). For any such an element $\lambda$, we set $\lambda_{\Delta} = \delta_{1}^{k_{1}}\dots \delta_{m}^{k_{m}}$, $\lambda_{\sigma} = \alpha_{1}^{l_{1}}\dots \alpha_{n}^{l_{n}}$, and denote by $\Lambda_{\Delta}$ and $\Lambda_{\sigma}$ the commutative semigroup of power products $\delta_{1}^{k_{1}}\dots \delta_{m}^{k_{m}}$ and the commutative group of elements of the form $\alpha_{1}^{l_{1}}\dots \alpha_{n}^{l_{n}}$, respectively.
The {\em order} of $\lambda$ is defined as $ord\,\lambda = \sum_{i=1}^{m}k_{i} + \sum_{j=1}^{n}|l_{j}|$, and for every $r\in {\bf N}$, we set $\Lambda(r) = \{\lambda\in \Lambda\,|\, ord\,\lambda\leq r\}$ ($r\in {\bf N}$).

\smallskip

A subring (ideal) $R_{0}$ of a $\Delta$-$\sigma$-ring $R$ is said to be a difference-differential  (or $\Delta$-$\sigma$-) subring
of $R$ (respectively, difference-differential (or $\Delta$-$\sigma$-) ideal of $R$) if $R_{0}$ is closed with respect to the action of any operator of $\Delta\bigcup\sigma^{\ast}$. If a prime ideal $P$ of $R$ is closed with respect to the action of $\Delta\bigcup\sigma^{\ast}$, it is called a {\em prime} difference-differential (or $\Delta$-$\sigma$-) {\em ideal \/} of $R$.

If $R$ is a $\Delta$-$\sigma$-field and $R_{0}$ a subfield of $R$ which is also a $\Delta$-$\sigma$-subring of $R$, then  $R_{0}$ is said to be a
$\Delta$-$\sigma$-subfield of $R$; $R$, in turn, is called a difference-differential (or $\Delta$-$\sigma$-) field extension or a $\Delta$-$\sigma$-overfield of $R_{0}$. In this case we also say that we have a $\Delta$-$\sigma$-field extension $R/R_{0}$.

If $R$ is a $\Delta$-$\sigma$-ring and $\Sigma \subseteq R$, then the intersection of all $\Delta$-ideals of $R$ containing the set $\Sigma$ is, obviously, the smallest $\Delta$-$\sigma$-ideal of $R$ containing $\Sigma$. This ideal is denoted by $[\Sigma]$. (It is clear that $[\Sigma]$ is generated, as an ideal, by the set $\{\lambda \xi | \xi \in \Sigma,\, \lambda\in \Lambda\}$). If the set $\Sigma$ is finite, $\Sigma = \{\xi_{1},\dots, \xi_{q}\}$, we say that the $\Delta$-ideal $I = [\Sigma]$ is finitely generated (we write this as $I = [\xi_{1},\dots, \xi_{q}]$) and call $\xi_{1},\dots, \xi_{q}$ difference-differential (or $\Delta$-$\sigma$-) generators of $I$.

If $K_0$ is a $\Delta$-$\sigma$-subfield of the $\Delta$-$\sigma$-field $K$ and $\Sigma \subseteq K$, then the intersection of all $\Delta$-$\sigma$-subfields
of $K$ containing $K_0$ and $\Sigma$ is the unique $\Delta$-$\sigma$-subfield of $K$ containing $K_0$ and $\Sigma$ and contained in every $\Delta$-$\sigma$-subfield of $K$ containing $K_0$ and $\Sigma$. It is denoted by $K_{0}\langle \Sigma \rangle$. If $K = K_{0}\langle \Sigma \rangle$ and the set $\Sigma$ is finite, $\Sigma = \{\eta_{1},\dots,\eta_{s}\}$, then $K$ is said to be a finitely generated $\Delta$-$\sigma$-extension of $K_{0}$ with the set of $\Delta$-$\sigma$-generators $\{\eta_{1},\dots,\eta_{s}\}$. In this case we write
$K = K_{0}\langle \eta_{1},\dots,\eta_{s}\rangle$.  It is easy to see that the field $K_{0}\langle \eta_{1},\dots,\eta_{s}\rangle$ coincides with the
field $K_0(\{\lambda \eta_{i} | \lambda \in \Lambda, 1\leq i\leq s\}$).

Let $R$ and $S$ be two difference-differential rings with the same basic set $\Delta\bigcup\sigma$, so that elements of the sets $\Delta$ and $\sigma$ act on each of the rings as mutually commuting derivations and automorphisms, respectively. A ring homomorphism $\phi: R \longrightarrow S$ is called a {\em difference-differential\/} (or $\Delta$-$\sigma$-) {\em homomorphism\/} if $\phi(\tau a) = \tau \phi(a)$ for any $\tau \in \Delta\bigcup\sigma$, $a\in R$.

\smallskip

If $K$ is a difference-differential ($\Delta$-$\sigma$-) field and $Y =\{y_{1},\dots, y_{s}\}$ is a finite set of symbols, then one can consider the countable set of symbols $\Lambda Y = \{\lambda y_{j}|\lambda \in \Lambda, 1\leq j\leq s\}$ and the polynomial ring $R = K[ \{\lambda y_{j}|\lambda \in \Lambda, 1\leq j\leq s\}]$ in the set of indeterminates $\Lambda Y$ over the field $K$. This polynomial ring is naturally viewed as a $\Delta$-$\sigma$-ring where $\tau(\lambda y_{j}) = (\tau\lambda)y_{j}$ for any $\tau \in \Delta\bigcup \sigma$, $\lambda \in \Lambda$, $1\leq j\leq s$, and the elements of $\Delta\bigcup\sigma$ act on the coefficients of the polynomials of $R$ as they act in the field $K$. The ring $R$ is called a {\em ring of difference-differential\/} (or $\Delta$-$\sigma$-) {\em polynomials\/} in the set of differential ($\Delta$-$\sigma$-)indeterminates $y_{1},\dots, y_{s}$ over $K$. This ring is denoted by $K\{y_{1},\dots, y_{s}\}$ and its elements are called difference-differential (or $\Delta$-$\sigma$-) polynomials.

\smallskip

Let $L = K\langle \eta_{1},\dots,\eta_{s}\rangle$ be a difference-differential field extension of $K$ generated by a finite set $\eta =\{\eta_{1},\dots,\eta_{s}\}$. As a field, $L = K(\{\lambda\eta_{j} | \lambda\in \Lambda, 1\leq j\leq s\})$.

The following is a unified version of E. Kolchin's theorem on differential dimension polynomial and the author's theorem on the dimension polynomial of a difference field extension (see \cite{Levin1} or ~\cite[Theorem 4.2.5]{Levin4}\,).

\begin{theorem}  With the above notation, there exists a polynomial $\phi_{\eta|K}(t)\in {\bf Q}[t]$ such that

{\em (i)}\, $\phi_{\eta|K}(r) = trdeg_{K}K(\{\lambda\eta_{j} | \lambda\in \Lambda(r), 1\leq j\leq s\})$ for all sufficiently large $r\in {\bf Z}$;

{\em (ii)}\, $\deg \phi_{\eta|K} \leq m+n$ and $\phi_{\eta|K}(t)$ can be written as \, $\phi_{\eta|K}(t) = \D\sum_{i=0}^{m+n}a_{i}{t+i\choose i}$ where $a_{0},\dots, a_{m+n}\in {\bf Z}$ and $2^{n}|a_{m+n}$\,\,.

{\em (iii)}\, $d =  \deg \phi_{\eta|K}$,\, $a_{m+n}$ and $a_{d}$ do not depend on the set of difference-differential generators $\eta$ of $L/K$ ($a_{d}\neq a_{m+n}$ if and only if $d < m+n$). Moreover, $\D\frac{a_{m+n}}{2^{n}}$ is equal to the difference-differential transcendence degree of $L$ over $K$ (denoted by $\Delta$-$\sigma$-$trdeg_{K}L$), that is, to the maximal number of elements $\xi_{1},\dots,\xi_{k}\in L$ such that the family
$\{\lambda \xi_{i} | \lambda \in \Lambda, 1\leq i\leq k\}$ is algebraically independent over $K$.
\end{theorem}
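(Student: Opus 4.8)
The plan is to reduce the computation of $trdeg_{K}V_{r}$, where $V_{r} = K(\{\lambda\eta_{j} \mid \lambda\in\Lambda(r),\, 1\leq j\leq s\})$, to a lattice-point count in ${\bf N}^{m}\times{\bf Z}^{n}$, and then to invoke a Kolchin-type dimension-polynomial theorem for subsets of this set. First I would introduce the ring of $\Delta$-$\sigma$-polynomials $R = K\{y_{1},\dots,y_{s}\}$, the $\Delta$-$\sigma$-homomorphism $R\to L$ sending $y_{j}\mapsto\eta_{j}$, and its kernel, the defining prime $\Delta$-$\sigma$-ideal $P$. Fixing a term ordering on $\Lambda Y$ that refines the order $ord$, I would take an autoreduced characteristic set $A$ of $P$ and let $U_{A}$ be the set of its leaders. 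The basic reduction property should yield that a term $\lambda y_{j}$ with $\lambda\in\Lambda(r)$ is algebraically dependent over $K$, modulo $P$, on the strictly lower terms precisely when it is a prolongation $\mu u$ of some leader $u\in U_{A}$; consequently the non-prolongations of order at most $r$ form a transcendence basis of $V_{r}$ over $K$, and $\phi_{\eta|K}(r)$ equals their number.

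The combinatorial core is then to count, for each large $r$, the terms $\lambda y_{j}$ with $ord\,\lambda\leq r$ that lie outside the union of the ``cones'' $\Lambda u$, $u\in U_{A}$. Here the presence of the inverse automorphisms $\alpha_{1}^{-1},\dots,\alpha_{n}^{-1}$ in $\Lambda$ is decisive: each such cone is unbounded in both directions in every $\sigma$-variable, so the count genuinely takes place in ${\bf N}^{m}\times{\bf Z}^{n}$ weighted by $ord(a,b) = \sum_{i}a_{i} + \sum_{j}|b_{j}|$. By inclusion--exclusion over the finite set of leader exponents, this reduces to counting lattice points of weight at most $r$ in a single shifted cone, which I would show agrees for all large $r$ with a numerical polynomial $\phi_{\eta|K}(t)$ expressible as $\sum_{i=0}^{m+n}a_{i}{t+i\choose i}$ with integer $a_{i}$ and $\deg\phi_{\eta|K}\leq m+n$. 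The divisibility $2^{n}\mid a_{m+n}$ would follow by decomposing ${\bf Z}^{n}$ into its $2^{n}$ closed orthants: on each orthant the weight is linear, so each contributes a standard Kolchin polynomial of leading term $\frac{1}{(m+n)!}t^{m+n}$, and the leading coefficient of any dimension polynomial in this setting is therefore an integer multiple of $\frac{2^{n}}{(m+n)!}$.

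For part (iii) I would argue by the usual two-sided comparison of generating sets. If $\zeta$ is another finite $\Delta$-$\sigma$-generating set of $L/K$, then each $\eta_{j}$ lies in $K(\{\lambda\zeta_{k}\mid ord\,\lambda\leq c\})$ and each $\zeta_{k}$ in $K(\{\lambda\eta_{j}\mid ord\,\lambda\leq c'\})$ for suitable constants $c,c'$, whence $\phi_{\eta|K}(r)\leq\phi_{\zeta|K}(r+c)$ and $\phi_{\zeta|K}(r)\leq\phi_{\eta|K}(r+c')$ for all large $r$. Since shifting the argument of a numerical polynomial by a constant changes neither its degree nor its leading coefficient, these inequalities force $d=\deg\phi_{\eta|K}$ and the leading coefficient---equivalently the binomial coefficient $a_{d}$---to be independent of the generators; and as $a_{m+n}$ equals $a_{d}$ when $d=m+n$ and vanishes otherwise, it is an invariant as well (so $a_{d}\neq a_{m+n}$ exactly when $d<m+n$). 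To identify $a_{m+n}/2^{n}$ with the $\Delta$-$\sigma$-transcendence degree $k$, I would use invariance to compute $\phi_{\eta|K}$ from a generating set containing a maximal $\Delta$-$\sigma$-algebraically independent family $\xi_{1},\dots,\xi_{k}$: each $\xi_{i}$ contributes a full free cone adding $\frac{2^{n}}{(m+n)!}$ to the leading coefficient, while the remaining generators, being $\Delta$-$\sigma$-algebraic over $K\langle\xi_{1},\dots,\xi_{k}\rangle$, only affect terms of degree below $m+n$; hence $a_{m+n}=2^{n}k$.

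The step I expect to be the main obstacle is the reduction itself---proving rigorously that the non-prolongations of the leaders of an autoreduced characteristic set constitute an exact transcendence basis of $V_{r}$ for every $r$. This requires the full $\Delta$-$\sigma$-reduction machinery developed in the paper, in particular the compatibility of the chosen term ordering with prolongation by operators of $\Lambda$ (including the inverse automorphisms) and a verification that no spurious algebraic dependencies arise among the parametric terms; the invertibility of the $\alpha_{j}$ makes the bookkeeping of cones over ${\bf Z}^{n}$ considerably more delicate than in the purely differential situation.
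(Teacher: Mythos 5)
Your proposal is correct and follows essentially the same route as the paper: the paper itself only cites this univariate theorem (from \cite{Levin1} and \cite[Theorem 4.2.5]{Levin4}), but the proof it gives in Section 5 for the multivariate generalization (Theorem 3.1) is exactly your strategy --- pass to the defining prime $\Delta$-$\sigma$-ideal, take a characteristic set, show the terms outside the similarity-cones of the $\sigma$-leaders form a transcendence basis of $V_r$, count them via the dimension polynomial of a subset of ${\bf N}^{m}\times {\bf Z}^{n}$ (with the $2^{n}$ divisibility coming from the orthant decomposition), and obtain invariance by the two-sided shift comparison of generating sets. The one point you flag as the main obstacle (exactness of the transcendence basis) is handled in the paper by Propositions 5.5 and 5.8, matching your outline.
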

The polynomial whose existence is established by this theorem is called a {\em univariate difference-differential} (or $\Delta$-$\sigma$-) {\em dimension polynomial} of the extension $L/K$ associated with the system of difference-differential generators $\eta$.

\section{Partition of the basic set of derivations and the formulation of the main theorem}

Let $K$ be a difference-differential field of zero characteristic with basic
sets $\Delta = \{\delta_{1},\dots, \delta_{m}\}$ and $\sigma = \{\alpha_{1},\dots, \alpha_{n}\}$
of derivations and automorphisms, respectively. Suppose that the set of derivations is
represented as the union of $p$ disjoint subsets ($p\geq 1$):
\begin{equation}\Delta = \Delta_{1}\bigcup \dots \bigcup \Delta_{p}\end{equation}
$$\text{where}\,\,\,  \Delta_{1} = \{\delta_{1},\dots, \delta_{m_{1}}\},\, \Delta_{2} = \{\delta_{m_{1}+1},\dots, \delta_{m_{1}+m_{2}}\}, \,\dots,$$
$$\Delta_{p} = \{\delta_{m_{1}+\dots + m_{p-1}+1},\dots, \delta_{m}\}\, \,\, \, (m_{1}+\dots + m_{p} = m).$$
If \,\,$\lambda  = \delta_{1}^{k_{1}}\dots \delta_{m}^{k_{m}}\alpha_{1}^{l_{1}}\dots \alpha_{n}^{l_{n}}\in\Lambda$
($k_{i}\in {\bf N}, \,\, l_{j}\in {\bf Z}$), then the order of $\lambda$ with respect to $\Delta_{i}$  ($1\leq i\leq p$)
is defined as $\D\sum_{\nu = m_{1}+\dots + m_{i-1}+1}^{m_{1}+\dots + m_{i}}k_{\nu}$; it is denoted by $ord_{i}\lambda$.
(If $i=1$, the last sum is replaced by $k_{1}+\dots + k_{m_{1}}$.)
The number $ord_{\sigma}\lambda = \D\sum_{j = 1}^{n}|l_{j}|$ is called the order of $\lambda$ with respect to $\sigma$. Furthermore, for any
 $r_{1},\dots, r_{p+1}\in {\bf N}$, we set $$\Lambda(r_{1},\dots, r_{p+1}) = \{\lambda\in\Lambda\,|\,ord_{i}\lambda \leq r_{i} \,
(i=1,\dots, p)\,\,\, \text{and}\,\,\, ord_{\sigma}\lambda \leq r_{p+1}\}.$$
In what follows, for any permutation $(j_{1},\dots, j_{p+1})$ of the set $\{1,\dots, p+1\}$, $<_{j_{1},\dots, j_{p+1}}$ will denote the lexicographic order on ${\bf N}^{p+1}$ such that

\noindent$(r_{1},\dots, r_{p+1})<_{j_{1},\dots, j_{p+1}} (s_{1},\dots, s_{p+1})$ if and only if either $r_{j_{1}} < s_{j_{1}}$ or there exists $k\in {\bf N}$, $1\leq k\leq p$, such that $r_{j_{\nu}} = s_{j_{\nu}}$ for $\nu = 1,\dots, k$ and $r_{j_{k+1}} < s_{j_{k+1}}$.

Furthermore, if $\Sigma \subseteq {\bf N}^{p+1}$, then $\Sigma'$ denotes the set

\noindent$\{e\in \Sigma | e$ is a maximal element of $\Sigma$ with
respect to one of the $(p+1)!$ lexicographic orders
$<_{j_{1},\dots, j_{p+1}}\}$.

For example, if $\Sigma = \{(3, 0, 2), (2, 1, 1), (0, 1, 4), (1, 0, 3), (1, 1, 6), (3, 1, 0), \\(1, 2, 0)\} \subseteq {\bf N}^{3}$, then
$\Sigma' = \{(3, 0, 2), (3, 1, 0), (1, 1, 6), (1, 2, 0)\}$.

\begin{theorem} Let $L = K\langle \eta_{1},\dots,\eta_{s}\rangle$ be a
$\Delta$-$\sigma$-field extension generated by a set $\eta =
\{\eta_{1}, \dots , \eta_{s}\}$. Then there exists a polynomial
$\Phi_{\eta}(t_{1},\dots, t_{p+1})$ in $(p+1)$ variables $t_{1},\dots, t_{p+1}$
with rational coefficients such that

{\em (i)} \,$\Phi_{\eta}(r_{1},\dots, r_{p+q}) = trdeg_{K}K(\D\bigcup_{j=1}^{s} \Lambda(r_{1},\dots, r_{p+1})\eta_{j})$

\noindent for all sufficiently large $(r_{1},\dots,r_{p+1})\in {\bf N}^{p+1}$ (it means that there exist nonnegative integers\, $s_{1},\dots,s_{p+1}$ such that the last equality holds for all $(r_{1},\dots, r_{p+1})\in {\bf N}^{p+1}$ with $r_{1}\geq s_{1}, \dots, r_{p+1}\geq s_{p+1}$);

{\em (ii)} \, $deg_{t_{i}}\Phi_{\eta} \leq m_{i}$ ($1\leq i\leq p$)\,\,\, and $deg_{t_{p+1}}\Phi_{\eta} \leq n$, so that $deg\,\Phi_{\eta}\leq m+n$ and $\Phi_{\eta}(t_{1},\dots, t_{p+1})$ can be represented as
$$\Phi_{\eta}(t_{1},\dots, t_{p+1}) = \D\sum_{i_{1}=0}^{m_{1}}\dots \D\sum_{i_{p}=0}^{m_{p}}\D\sum_{i_{p+1}=0}^{n}a_{i_{1}\dots i_{p+1}}
{t_{1}+i_{1}\choose i_{1}}\dots {t_{p+1}+i_{p+1}\choose i_{p+1}}$$
where $a_{i_{1}\dots i_{p+1}}\in {\bf Z}$ and $2^{n}\,|\,a_{m_{1}\dots m_{p}n}$.

{\em (iii)} \,Let $E_{\eta} = \{(i_{1},\dots, i_{p+1})\in {\bf N}^{p+1}\,|\,0\leq i_{k}\leq m_{k}$ for $k=1,\dots, p$, $0\leq i_{p+1}\leq n$,
and $a_{i_{1}\dots i_{p+1}}\neq 0\}$. Then $d = deg\,\Phi_{\eta}$, $a_{m_{1}\dots m_{p+1}}$, elements $(k_{1},\dots, k_{p+1})\in E_{\eta}'$, the corresponding coefficients $a_{k_{1}\dots k_{p+1}}$ and the coefficients of the terms of total degree $d$ do not depend on the choice of the
system of $\Delta$-$\sigma$-generators $\eta$.
\end{theorem}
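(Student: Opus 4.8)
The plan is to reduce the assertion to a problem about counting lattice points in boxes, using the method of characteristic sets with respect to the $(p+1)!$ orderings developed earlier, and then to read off both the numerical-polynomial behaviour and the invariants. Throughout write $\bar r = (r_{1},\dots,r_{p+1})$.

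First I would introduce the ring of $\Delta$-$\sigma$-polynomials $R = K\{y_{1},\dots,y_{s}\}$ together with the $\Delta$-$\sigma$-homomorphism $R \to L$ sending $y_{j}\mapsto\eta_{j}$, and let $P$ be its kernel, a prime $\Delta$-$\sigma$-ideal. Applying the method of characteristic sets with respect to the family of lexicographic orders, I would fix a characteristic set $\mathcal A$ of $P$; its leaders partition the terms $\lambda y_{j}$ ($\lambda\in\Lambda$, $1\le j\le s$) into those that are transforms $\mu\cdot(\text{leader of some }A\in\mathcal A)$ and those that are not. A standard autoreduction argument then shows that, for every $\bar r$, the non-reducible terms $\lambda y_{j}$ with $\lambda\in\Lambda(\bar r)$ map to a transcendence basis of $K(\bigcup_{j}\Lambda(\bar r)\eta_{j})$ over $K$, so that
$$trdeg_{K}K\Big(\D\bigcup_{j=1}^{s}\Lambda(\bar r)\eta_{j}\Big) = \D\sum_{j=1}^{s}\big|\{\lambda\in\Lambda(\bar r) : \lambda y_{j}\text{ is non-reducible}\}\big|.$$

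Next I would turn each summand into a lattice-point count. Identifying $\Lambda$ with ${\bf N}^{m}\times{\bf Z}^{n}$ via the exponents $(k_{1},\dots,k_{m},l_{1},\dots,l_{n})$ and replacing each ${\bf Z}$-coordinate $l_{\nu}$ by a pair in ${\bf N}^{2}$ to linearise $|l_{\nu}|$, the non-reducibility condition for a fixed $j$ becomes the condition of lying outside the staircase generated by the finite set of leader exponents, while $\Lambda(\bar r)$ becomes the box cut out by the forms $ord_{1},\dots,ord_{p},ord_{\sigma}$. The combinatorial heart is then the partitioned analogue of Kolchin's lemma on dimension polynomials of subsets of ${\bf N}^{m}$: the number of points of such a box outside a finitely generated staircase agrees, for all large $\bar r$, with a numerical polynomial expressible in the basis $\prod_{k}\binom{t_{k}+i_{k}}{i_{k}}$ with integer coefficients. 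Summing over $j$ produces $\Phi_{\eta}$ and gives (i). The degree bounds in (ii) follow because $ord_{i}$ involves only the $m_{i}$ exponents of $\Delta_{i}$ and $ord_{\sigma}$ the $n$ doubled coordinates; the divisibility $2^{n}\,|\,a_{m_{1}\dots m_{p}n}$ reflects that the top-order contribution in the $\sigma$-direction counts points of ${\bf Z}^{n}$ with $\sum|l_{\nu}|\le r_{p+1}$, whose leading term carries the factor $2^{n}$.

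Finally, for the invariance claims (iii) I would compare two systems of $\Delta$-$\sigma$-generators $\eta$ and $\zeta$. Since $L/K$ is finitely generated there is $c\in{\bf N}$ with each $\zeta_{i}\in K(\bigcup_{j}\Lambda(c,\dots,c)\eta_{j})$ and symmetrically; applying operators $\lambda\in\Lambda(\bar r)$ and using that orders add under composition yields, for all large $\bar r$, the sandwich
$$\Phi_{\zeta}(\bar r)\le\Phi_{\eta}(r_{1}+c,\dots,r_{p+1}+c)\quad\text{and}\quad\Phi_{\eta}(\bar r)\le\Phi_{\zeta}(r_{1}+c,\dots,r_{p+1}+c).$$
I expect this step to be the main obstacle, since one must extract from these inequalities not only equality of the total degree $d$ and of the leading coefficient $a_{m_{1}\dots m_{p+1}}$, but also equality of every corner coefficient $a_{k_{1}\dots k_{p+1}}$ with $(k_{1},\dots,k_{p+1})\in E_{\eta}'$ and of the entire degree-$d$ homogeneous part. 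The mechanism is that the shift $t_{k}\mapsto t_{k}+c$ leaves the leading coefficient in each variable unchanged, hence preserves the dominant term under every order $<_{j_{1},\dots,j_{p+1}}$; taking the iterated limit $r_{j_{1}}\gg\dots\gg r_{j_{p+1}}$ isolates on both sides of the sandwich precisely the coefficient attached to the $<_{j_{1},\dots,j_{p+1}}$-maximal element of the support, which by definition is the relevant member of $E'$. Running this through all $(p+1)!$ orders pins down the corner data, while letting all $r_{k}$ grow proportionally pins down the degree-$d$ part, establishing (iii).
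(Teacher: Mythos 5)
Your overall architecture (characteristic set of the defining prime ideal $P$, translation into a lattice-point count, sandwich inequalities for the invariance statements) matches the paper's, and your treatment of part (iii) via the $(p+1)!$ lexicographic orders is essentially the argument the paper gives. But there is a genuine gap in your counting step, and it is exactly the point where the partitioned difference-differential setting differs from the classical Kolchin picture. You assert that the terms $\lambda y_{j}$ with $\lambda\in\Lambda(\bar r)$ that are \emph{not} multiples of any $\sigma$-leader of the characteristic set map onto a transcendence basis of $K(\bigcup_{j}\Lambda(\bar r)\eta_{j})$, so that the transcendence degree is the number of lattice points outside the staircase of leader exponents. That is false here. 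Reduction modulo an element $A$ of the characteristic set eliminates a term $\lambda v_{A}$ at the cost of introducing the terms $\lambda u_{A}^{(j)}$ ($1\leq j\leq p$), whose orders $ord_{j}(\lambda u_{A}^{(j)})$ may exceed $r_{j}$ even though $ord_{j}(\lambda v_{A})\leq r_{j}$; in that case the relation $\lambda A(\eta)=0$ involves elements outside $K(\bigcup_{j}\Lambda(\bar r)\eta_{j})$ and cannot be used to express $\lambda v_{A}(\eta)$ algebraically over the smaller terms. This is why Definition 5.1 builds the conditions $ord_{j}(\lambda u_{A}^{(j)})\leq ord_{j}u_{B}^{(j)}$ into the notion of reducedness, and why the paper's transcendence basis is $U^{(1)}_{r_{1}\dots r_{p+1}}\cup U^{(2)}_{r_{1}\dots r_{p+1}}$, where $U^{(2)}$ consists of the multiples $\lambda v_{A_{j}}$ of leaders for which some $ord_{i}(\lambda u_{A_{j}}^{(i)})>r_{i}$. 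Your count captures only $U^{(1)}$.

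The omission is not cosmetic: $Card\,U^{(2)}$ contributes a polynomial of the same total degree in general, and it requires a separate inclusion--exclusion argument (the paper invokes the method of Theorem 4.1 of \cite{Levin3} to show $Card\,U^{(2)}_{r_{1}\dots r_{p+1}}$ is a linear combination of polynomials of the form (4.5)). In Example 5.11 one has $Card\,U^{(1)}_{r_{1}r_{2}r_{3}}=r_{1}r_{2}+2r_{2}r_{3}+r_{1}+r_{2}+2r_{3}+1$ while the actual dimension polynomial is $t_{1}t_{2}+4t_{1}t_{3}+4t_{2}t_{3}+t_{1}+t_{2}+1$; the discrepancy is precisely $Card\,U^{(2)}=4r_{1}r_{3}+2r_{2}r_{3}-2r_{3}$. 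So your proposed polynomial would be wrong already in the simplest nontrivial examples. To repair the argument you must (a) prove algebraic independence of the images of $U^{(1)}\cup U^{(2)}$ (the paper does this via Proposition 5.8, since a putative algebraic relation among them is a nonzero element of $P$ reduced with respect to the characteristic set), (b) prove that every $\lambda\eta_{j}$ with $\lambda\in\Lambda(r_{1},\dots,r_{p+1})$ is algebraic over $K$ adjoined these elements by induction along $<_{\sigma}$, and (c) supply the additional counting argument for $U^{(2)}$.
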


\begin{definition}
The polynomial $\Phi_{\eta}(t_{1},\dots, t_{p+1})$ is said to be the difference-differential {\em (or $\Delta$-$\sigma$-)} dimension polynomial of the $\Delta$-$\sigma$-field extension $L/K$ associated with the set of $\Delta$-$\sigma$-generators $\eta$ and partition {\em (3.1)} of the basic set of derivations.
\end{definition}

The $\Delta$-$\sigma$-dimension polynomial associated with partition (3.1) has the following interpretation
as the strength of a system of difference-differential equations.

Let us consider a system of partial difference-differential equations
\begin{equation}
A_{i}(f_{1},\dots, f_{s}) = 0\hspace{0.3in}(i=1,\dots, q)
\end{equation}
over a field of functions of $m$ real variables $x_{1},\dots, x_{m}$ ($f_{1},\dots, f_{s}$ are unknown functions of $x_{1},\dots, x_{m}$). Suppose that $\Delta = \{\delta_{1},\dots, \delta_{m}\}$ where $\delta_{i}$ is the partial differentiation $\partial/\partial x_{i}$ ($i=1,\dots, m$) and the basic set of automorphisms $\sigma = \{\alpha_{1},\dots, \alpha_{m}\}$ consists of $m$ shifts of arguments, $f(x_{1},\dots, x_{m})\mapsto f(x_{1},\dots, x_{i-1}, x_{i}+h_{i}, x_{i+1},\dots, x_{m})$ ($1\leq i\leq m$, $h_{1},\dots, h_{m}\in {\bf R}$). Thus, we assume that the left-hand sides of the equations in (3.2) contain unknown functions $f_{i}$, their partial derivatives, their images under the shifts $\alpha_{j}$, and various compositions of such shifts and partial derivations. Furthermore, we suppose that system (3.2) is algebraic, that is, all $A_{i}(y_{1},\dots, y_{s})$
are elements of a ring of $\Delta$-$\sigma$-polynomials $K\{y_{1},\dots, y_{s}\}$ with coefficients in some functional $\Delta$-$\sigma$-field $K$.

Let us consider a grid with equal cells of dimension $h_{1}\times\dots\times h_{m}$ that fills the whole space ${\bf R}^{m}$. We fix some node $\mathcal{P}$ and say that {\em a node $\mathcal{Q}$ has order $i$} if the shortest path from $\mathcal{P}$ to $\mathcal{Q}$ along the edges of the grid consists of $i$ steps (by a step we mean a path from a node
of the grid to a neighbor node along the edge between them). We also fix partition (3.1) of the set of basic derivations $\Delta$ (such a partition can be, for example,  a natural separation of (all or some) derivations with respect to coordinates and the derivation with respect to time).

For any $r_{1},\dots, r_{p+1}\in {\bf N}$, let us consider the values of the unknown functions $f_{1},\dots, f_{s}$ and their partial derivatives, whose order with respect to $\Delta_{i}$ does not exceed $r_{i}$ ($1\leq i\leq p$), at the nodes whose order does not exceed $r_{p+1}$.  If $f_{1},\dots, f_{s}$ should not satisfy any system of equations (or any other condition), these values can be chosen arbitrarily. Because of the system (and equations obtained from the equations of the system by partial differentiations and transformations of the form $f_{j}(x_{1},\dots, x_{m})\mapsto f_{j}(x_{1}+k_{1}h_{1},\dots, x_{m}+k_{m}h_{m})$ with $k_{1},\dots, k_{m}\in {\bf Z}$, $1\leq j\leq s$), the number of independent values of the functions $f_{1},\dots, f_{s}$ and their partial derivatives whose $i$th order does not exceed $r_{i}$ ($1\leq i\leq p$) at the nodes of order $\leq r_{p+1}$ decreases. This number, which is a function of $p+1$ variables $r_{1},\dots, r_{p+1}$, is the ``measure of strength'' of the system  in the sense of A. Einstein. We denote it by $S_{r_{1},\dots, r_{p+1}}$.

Suppose that the $\Delta$-$\sigma$-ideal $J$ generated in the ring $K\{y_{1},\dots, y_{s}\}$ by the $\Delta$-$\sigma$-polynomials $A_{1},\dots, A_{q}$ is prime (e. g., the polynomials are linear). Then the field of fractions $L$ of the $\Delta$-$\sigma$-integral domain $K\{y_{1},\dots, y_{s}\}/J$ has a natural structure of a $\Delta$-$\sigma$-field extension of $K$ generated by the finite set $\eta = \{\eta_{1},\dots, \eta_{s}\}$ where $\eta_{i}$ is the canonical image of $y_{i}$ in $K\{y_{1},\dots, y_{s}\}/J$ ($1\leq i\leq s$). It is easy to see that the $\Delta$-$\sigma$-dimension polynomial $\Phi_{\eta}(t_{1},\dots, t_{p+1})$ of the extension $L/K$ associated with the system of $\Delta$-$\sigma$-generators $\eta$ has the property that $\Phi_{\eta}(r_{1},\dots, r_{p+1}) = S_{r_{1},\dots, r_{p+1}}$ for all sufficiently large $(r_{1},\dots, r_{p+q})\in {\bf N}^{p+1}$, so this dimension polynomial is the measure of strength of the system of difference-differential equations (3.2) in the sense of A. Einstein.

\section{Numerical polynomials of subsets of ${\bf N}^{m}\times {\bf Z}^{n}$}
\setcounter{equation}{0}
\begin{definition}
A polynomial $f(t_{1}, \dots,t_{p})$ in $p$ variables $t_{1},\dots, t_{p}$
($p\geq 1$) with rational coefficients is called
{\em numerical\/} if $f(r_{1},\dots, r_{p})\in {\bf Z}$
for all sufficiently large $(r_{1}, \dots, r_{p})\in{\bf Z}^{p}$.
\end{definition}
Of course, every polynomial with integer coefficients is numerical.  As an example of a numerical polynomial in $p$ variables with noninteger coefficients ($p\geq 1$) one can consider $\prod_{i=1}^{p}{t_{i}\choose m_{i}}$ \, where $m_{1},\dots, m_{p}\in{\bf N}$. (As usual, ${t\choose k}$ ($k\geq 1$) denotes the polynomial $\frac{t(t-1)\dots (t-k+1)}{k!}$, ${t\choose0} = 1$, and ${t\choose k} = 0$ if $k < 0$.)

\smallskip

The following theorem proved in ~\cite[Chapter 2]{KLMP} gives the ``canonical'' representation of a numerical polynomial in several variables.
\begin{theorem}
Let $f(t_{1},\dots, t_{p})$ be a numerical polynomial in $p$ variables and let $deg_{t_{i}}\, f = m_{i}$ ($m_{1},\dots, m_{p}\in{\bf N}$). Then $f(t_{1},\dots, t_{p})$ can be represented as
\begin{equation}
f(t_{1},\dots t_{p}) =\D\sum_{i_{1}=0}^{m_{1}}\dots \D\sum_{i_{p}=0}^{m_{p}}
{a_{i_{1}\dots i_{p}}}{t_{1}+i_{1}\choose i_{1}}\dots{t_{p}+i_{p}
\choose i_{p}}
\end{equation}
with uniquely defined integer coefficients $a_{i_{1}\dots i_{p}}$.
\end{theorem}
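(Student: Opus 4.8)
The plan is to prove existence and uniqueness of an \emph{integer} representation; existence and uniqueness over $\mathbf{Q}$ are automatic, since the products $\binom{t_{1}+i_{1}}{i_{1}}\cdots\binom{t_{p}+i_{p}}{i_{p}}$ with $0\le i_{k}\le m_{k}$ form a basis of the $\mathbf{Q}$-vector space of polynomials whose degree in each $t_{k}$ is at most $m_{k}$, and $f$ lies in this space. Thus the coefficients $a_{i_{1}\dots i_{p}}$ are uniquely determined rationals, and the whole content of the theorem is that they are integers. I would establish this by induction on $p$, the one-variable case being the crux.

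For the base case $p=1$, I would write $f(t)=\sum_{i=0}^{m} b_{i}\binom{t}{i}$ in the equivalent basis $\binom{t}{i}$ and introduce the finite-difference operator $\nabla$ defined by $(\nabla h)(t)=h(t+1)-h(t)$. First I would show that a numerical $f$ of degree $m$ takes integer values at \emph{all} integers, not merely large ones: since $\nabla^{m+1}f\equiv 0$, expanding this identity expresses $f(t)$ as an integer linear combination of $f(t+1),\dots,f(t+m+1)$ (the coefficient of $f(t)$ being $\pm 1$), so integrality propagates downward from large arguments to every integer. Because $\nabla\binom{t}{i}=\binom{t}{i-1}$, iterating gives $b_{i}=(\nabla^{i} f)(0)$, and since $\nabla^{i} f$ is an integer combination of values of $f$ at integers, each $b_{i}\in\mathbf{Z}$. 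Finally, Vandermonde's identity $\binom{t+i}{i}=\sum_{k=0}^{i}\binom{i}{k}\binom{t}{k}$ shows that the change of basis between $\{\binom{t}{i}\}$ and $\{\binom{t+i}{i}\}$ is given by a lower-triangular integer matrix with $1$'s on the diagonal, hence is unimodular; therefore integrality of the coefficients in one basis is equivalent to integrality in the other, which yields the desired integer $a_{i}$ and settles $p=1$.

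For the inductive step, I would regard $f$ as a polynomial in $t_{p}$ with coefficients in $\mathbf{Q}[t_{1},\dots,t_{p-1}]$ and expand it in the basis $\binom{t_{p}+i_{p}}{i_{p}}$, so that $f=\sum_{i_{p}=0}^{m_{p}} g_{i_{p}}(t_{1},\dots,t_{p-1})\binom{t_{p}+i_{p}}{i_{p}}$ with $g_{i_{p}}\in\mathbf{Q}[t_{1},\dots,t_{p-1}]$ uniquely determined. The key point is that each $g_{i_{p}}$ is itself numerical in $p-1$ variables: fixing arbitrary large integers $r_{1},\dots,r_{p-1}$, the one-variable polynomial $t_{p}\mapsto f(r_{1},\dots,r_{p-1},t_{p})$ is numerical, so by the base case its coefficients in the $\binom{t_{p}+i_{p}}{i_{p}}$ basis --- which are exactly the values $g_{i_{p}}(r_{1},\dots,r_{p-1})$ --- are integers; as this holds for all large $(r_{1},\dots,r_{p-1})$, each $g_{i_{p}}$ is numerical. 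The induction hypothesis then provides integer coefficients $a_{i_{1}\dots i_{p}}$ with $g_{i_{p}}=\sum_{i_{1},\dots,i_{p-1}}a_{i_{1}\dots i_{p}}\binom{t_{1}+i_{1}}{i_{1}}\cdots\binom{t_{p-1}+i_{p-1}}{i_{p-1}}$, and substituting back produces the required representation; the triangular change of basis preserves the degree in each variable, so the ranges $0\le i_{k}\le m_{k}$ are respected.

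I expect the main obstacle to be the base case rather than the induction: specifically, the propagation argument showing that ``integer values at large integers'' forces integer values everywhere (so that the finite differences $(\nabla^{i} f)(0)$ make sense as integers), together with the passage between the two binomial bases. Once the one-variable statement is in hand with explicit integrality, the inductive step is essentially bookkeeping, the only subtlety being to verify that the fiberwise coefficients $g_{i_{p}}$ inherit the numerical property, which is exactly where the base case is invoked.
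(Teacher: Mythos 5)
Your argument is correct. The paper does not actually prove Theorem 4.2 itself --- it cites \cite[Chapter 2]{KLMP} --- and your proof is essentially the standard one found there: induction on $p$ reducing to the one-variable case, which is handled by finite differences, using $\nabla^{m+1}f\equiv 0$ to propagate integrality of values from large integers down to all integers and the unimodular (Vandermonde) change of basis between $\binom{t}{i}$ and $\binom{t+i}{i}$.
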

In what follows, we deal with subsets of
${\bf N}^{m}\times {\bf Z}^{n}$ ($m, n\geq 1$) and a fixed partition of the set ${\bf N}_{m} = \{1,\dots , m\}$ into $p$ disjoint subsets ($p\geq 1$):
\begin{equation}
{\bf N}_{m} = N_{1}\bigcup\dots N_{p}
\end{equation}
where $N_{1} = \{1,\dots, m_{1}\}$,\dots,
$N_{p} = \{m_{1}+\dots + m_{p-1}+1,\dots, m\}$ \, ($m_{1}+\dots + m_{p} = m$).
If $a = (a_{1},\dots, a_{m+n})\in {\bf N}^{m}\times {\bf Z}^{n}$ we denote the numbers $\sum_{i=1}^{m_{1}}a_{i}$,
$\sum_{i=m_{1}+1}^{m_{1}+m_{2}}a_{i},\dots, \sum_{i=m_{1}+\dots + m_{p-1} + 1}^{m}a_{i}$,
$\sum_{i=m+1}^{m+n}|a_{i}|$ by $ord_{1}a,\dots, ord_{p+1}a$,

\smallskip

\noindent respectively. Furthermore, we consider the set ${\bf Z}^{n}$ as a union
\begin{equation}
{\bf Z}^{n} = \bigcup_{1\leq j\leq 2^{n}}{\bf Z}_{j}^{(n)}
\end{equation}
where
${\bf Z}_{1}^{(n)}, \dots, {\bf Z}_{2^{n}}^{(n)}$ are all different Cartesian products of $n$ sets each of which is either
${\bf N}$ or ${\bf Z_{-}}=\{a\in {\bf Z}|a\leq 0\}$. We assume that ${\bf Z}_{1}^{(n)} = {\bf N}^{n}$ and call ${\bf Z}_{j}^{(n)}$ the {\em $j$th orthant} of the set ${\bf Z}^{n}$ ($1\leq j\leq 2^{n}$). The set ${\bf N}^{m}\times {\bf Z}^{n}$ is considered as a partially
ordered set with the order $\unlhd$ such that $(e_{1},\dots, e_{m}, f_{1},\dots, f_{n})\unlhd (e'_{1},\dots, e'_{m}, f'_{1},\dots, f'_{n})$ if and only if $(f_{1},\dots, f_{n})$ and $(f'_{1},\dots, f'_{n})$ belong to the same orthant ${\bf Z}_{k}^{(n)}$ and the
$(m+n)$-tuple $(e_{1},\dots, e_{m}, |f_{1}|,\dots, |f_{n}|)$ is less than $(e'_{1},\dots, e'_{m}, |f'_{1}|,\dots, |f'_{n}|)$ with respect
to the product order on ${\bf N}^{m+n}$.

In what follows, for any set $A\subseteq {\bf N}^{m}\times {\bf Z}^{n}$, $W_{A}$ will denote the set of all elements of ${\bf N}^{m}\times {\bf Z}^{n}$ that do not exceed any element of $A$ with respect to the order $\unlhd$.  (Thus, $w\in W_{A}$ if and only if there is no $a\in A$ such that $a\unlhd w$.) Furthermore, for any $r_{1},\dots r_{p+1}\in {\bf N}$, $A(r_{1},\dots r_{p+1})$ will denote the set of all elements $x = (x_{1},\dots, x_{m}, x'_{1},\dots, x'_{n})\in A$ such that $ord_{i}x\leq r_{i}$ ($i=1,\dots, p+1$).

\smallskip

The above notation can be naturally restricted to subsets of ${\bf N}^{m}$. If $E\subseteq {\bf N}^{m}$ and $s_{1},\dots, s_{p}\in {\bf N}$, then $E(s_{1},\dots, s_{p})$ will denote the set $\{e = (e_{1},\dots, e_{m})\in E\,|\,ord_{i}(e_{1},\dots, e_{m},0,\dots, 0)\leq s_{i}$ for $i=1,\dots, p\}$ (\,$(e_{1},\dots, e_{m},0,\dots, 0)$ ends with $n$ zeros; it is treated as a point in ${\bf N}^{m}\times {\bf Z}^{n}$.) Furthermore $V_{E}$ will denote the set of all $m$-tuples $v = (v_{1},\dots , v_{m})\in {\bf N}$ which are not greater than or equal to any $m$-tuple from $E$ with respect to the product order on ${\bf N}^{m}$. (Recall that the product order on  ${\bf N}^{m}$ is a partial order $\leq_P$ on ${\bf N}^{m}$ such that $c =(c_{1}, \dots , c_{m})\leq_{P} c' =(c'_{1}, \dots , c'_{m})$ if and only if $c_{i}\leq c'_{i}$ for all $i=1, \dots , m$.   If $c\leq_{P} c'$ and $c\neq c'$, we write $c<_{P} c'$ ).  Clearly, $v=(v_{1}, \dots , v_{m})\in V_{E}$ if and only if for any element  $(e_{1},\dots , e_{m})\in E$, there exists $i\in {\bf N}, 1\leq i\leq m$, such that $e_{i} > v_{i}$.

The following two theorems proved in ~\cite[Chapter 2]{KLMP} generalize the well-known Kolchin's result on the numerical polynomials associated with subsets of ${\bf N}^{m}$ (see ~\cite[Chapter 0, Lemma 16]{K2}) and give an explicit formula for the numerical polynomials in $p$ variables associated with a finite subset of ${\bf N}^{m}$.

\begin{theorem}
Let $E$ be a subset of ${\bf N}^{m}$ where $m = m_{1} + \dots + m_{p}$ for some nonnegative integers  $m_{1},\dots, m_{p}$ ($p\geq 1$). Then there exists a numerical polynomial $\omega_{E}(t_{1},\dots, t_{p})$ with the following properties:

{\em (i)} \,  $\omega_{E}(r_{1},\dots, r_{p}) = Card\,V_{E}(r_{1},\dots, r_{p})$ for all sufficiently large $(r_{1},\dots, r_{p})\in {\bf N}^{p}$.
{\em (As usual, $Card \, M$ denotes the number of elements of a finite set $M$).}

{\em (ii)} \, $deg_{t_{i}}\omega_{E}\leq m_{i}$ for all $i = 1,\dots, p$.

{\em (iii)} \, $deg\,\omega_{E} = m$ if and only if $E=\emptyset$.
Then  $\omega_{E}(t_{1},\dots, t_{p}) = \D\prod_{i=1}^{p}{t_{i}+m_{i}\choose m_{i}}$.
\end{theorem}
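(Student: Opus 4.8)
The plan is to reduce to the case of a finite $E$ and then induct on $Card\,E$, peeling off one element at a time and converting the resulting set difference into a single shifted copy of a smaller problem via a translation bijection.

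First I would note that $V_E = V_{E_0}$, where $E_0$ is the set of minimal elements of $E$ under $\leq_P$; indeed $v \geq_P e$ for some $e \in E$ if and only if $v \geq_P e_0$ for some minimal $e_0$, and the functions $ord_i$ are unchanged. Since the product order on ${\bf N}^m$ is a well-partial-order (Dickson's lemma), $E_0$ is finite, so I may assume $E = \{e^{(1)}, \dots, e^{(k)}\}$ is finite and argue by induction on $k$. For the base case $k = 0$ ($E = \emptyset$) every tuple lies in $V_E$, so $Card\,V_\emptyset(r_1, \dots, r_p)$ is just the number of $v \in {\bf N}^m$ with $ord_i\,v \leq r_i$; this count factors over the blocks $N_1, \dots, N_p$, and the number of tuples in ${\bf N}^{m_i}$ with coordinate sum at most $r_i$ is $\binom{r_i + m_i}{m_i}$. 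Hence $\omega_\emptyset = \prod_{i=1}^p \binom{t_i + m_i}{m_i}$, a numerical polynomial with $\deg_{t_i} \omega_\emptyset = m_i$ and total degree $m$; this already supplies the explicit formula and the forward direction of (iii).

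For the inductive step I would fix $e^{(k)}$, set $E' = E \setminus \{e^{(k)}\}$, and define $\bar E = \{\bar e^{(1)}, \dots, \bar e^{(k-1)}\}$ by $\bar e^{(j)}_l = \max(0,\, e^{(j)}_l - e^{(k)}_l)$. The heart of the step is the bijection $v \mapsto w = v - e^{(k)}$ from $V_{E'}(r_1,\dots,r_p) \setminus V_E(r_1,\dots,r_p)$ (note $V_E \subseteq V_{E'}$) onto $V_{\bar E}(r_1 - ord_1\,e^{(k)}, \dots, r_p - ord_p\,e^{(k)})$: an element of the left-hand set satisfies $v \geq_P e^{(k)}$ (so $w \in {\bf N}^m$), the constraint $ord_i\,v \leq r_i$ becomes $ord_i\,w \leq r_i - ord_i\,e^{(k)}$, and $v \not\geq_P e^{(j)}$ becomes $w \not\geq_P \bar e^{(j)}$ because $e^{(k)}_l + w_l \geq e^{(j)}_l \iff w_l \geq \bar e^{(j)}_l$. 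Counting both sides for all sufficiently large $(r_1,\dots,r_p)$ and invoking the inductively known polynomials for $E'$ and $\bar E$ (each a set with fewer than $k$ elements) gives, with $t = (t_1,\dots,t_p)$, the recursion $\omega_E(t) = \omega_{E'}(t) - \omega_{\bar E}(t_1 - ord_1\,e^{(k)}, \dots, t_p - ord_p\,e^{(k)})$. Because integer shifts of the arguments preserve both the numerical property and each partial degree, this recursion simultaneously establishes existence (i) and the degree bound (ii) by induction.

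The subtlest point is the reverse implication in (iii), that $E \neq \emptyset$ forces $\deg \omega_E < m$. Here I would track the single coefficient $c(E)$ of the monomial $\prod_{i} t_i^{m_i}$: by (ii) this is the only monomial that can realize total degree $m$, so $\deg \omega_E = m$ if and only if $c(E) \neq 0$. Since translation preserves the top-total-degree component of a polynomial, the recursion yields $c(E) = c(E') - c(\bar E)$. Starting from $c(\emptyset) = \prod_i 1/m_i! \neq 0$, I would prove by induction on $k$ that $c(E) = 0$ whenever $E \neq \emptyset$: for a singleton $E' = \bar E = \emptyset$, so $c(E) = c(\emptyset) - c(\emptyset) = 0$, while for $k \geq 2$ both $E'$ and $\bar E$ are nonempty sets with fewer than $k$ elements, whence $c(E') = c(\bar E) = 0$ by the inductive hypothesis. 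I expect the main care to lie in the translation bijection — in particular the equivalence $v \not\geq_P e^{(j)} \iff w \not\geq_P \bar e^{(j)}$ and the verification that the recursion is valid for all sufficiently large $(r_1,\dots,r_p)$ — and in confirming that the degree-$m$ coefficient is the only obstruction to the total degree dropping below $m$.
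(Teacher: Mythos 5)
Your proof is correct. The paper does not actually prove this theorem itself --- it cites \cite[Chapter 2]{KLMP} --- but your argument (reduce to the finite set of minimal elements via Dickson's lemma, then induct on $Card\,E$ using the translation bijection $v\mapsto v-e^{(k)}$ on $V_{E'}\setminus V_{E}$) is essentially the standard one: iterating your recursion is exactly the inclusion--exclusion over subsets $\sigma$ of $E$ with the coordinatewise maxima $\bar{e}_{\sigma}$ that produces the explicit formula (4.4) of Theorem 4.5, and your leading-coefficient bookkeeping for part (iii) is sound.
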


\begin{definition}
The polynomial $\omega_{E}(t_{1},\dots, t_{p})$ is called the dimension polynomial of the set
$E\subseteq {\bf N}^{m}$ associated with the partition
$(m_{1},\dots, m_{p})$ of $m$.
\end{definition}

\begin{theorem}
Let $E = \{e_{1}, \dots, e_{q}\}$ ($q\geq 1$) be a finite subset of ${\bf N}^{m}$
and let a partition {\em (4.2)}  of the set ${\bf N}_{m}$ into $p$ disjoint subsets $N_{1},\dots, N_{p}$ be fixed.
Let $e_{i} = (e_{i1}, \dots, e_{im})$ \, ($1\leq i\leq q$)
and for any $l\in {\bf N}$, $0\leq l\leq q$, let $\Gamma (l,q)$
denote the set of all $l$-element subsets of the set
${\bf N}_{q} = \{1,\dots, q\}$. Furthermore, for any
$\sigma \in \Gamma (l,q)$, let $\bar{e}_{\emptyset j} = 0$, $\bar{e}_{\sigma j} = \max \{e_{ij} |
i\in \sigma\}$ if $\sigma\neq \emptyset$ ($1\leq j\leq m$), and
$b_{\sigma k} =\D\sum_{h\in N_{k}}\bar{e}_{\sigma h}$ ($k = 1,\dots, p$).
Then \begin{equation} \omega_{E}(t_{1},\dots, t_{p}) =
\D\sum_{l=0}^{q}(-1)^{l}\D\sum_{\sigma \in \Gamma (l,\, q)}
\D\prod_{j=1}^{p}{t_{j}+m_{j} - b_{\sigma j}\choose m_{j}}
\end{equation}
\end{theorem}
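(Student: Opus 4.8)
The plan is to reduce the statement to a lattice-point count and then evaluate that count by inclusion--exclusion. By the preceding theorem (Theorem 4.3) the numerical polynomial $\omega_{E}$ exists and satisfies $\omega_{E}(r_{1},\dots,r_{p}) = \mathrm{Card}\,V_{E}(r_{1},\dots,r_{p})$ for all sufficiently large $(r_{1},\dots,r_{p})\in\mathbf{N}^{p}$, and a numerical polynomial is determined by its values at all large integer points (Theorem 4.2). Hence it suffices to produce an expression in $(r_{1},\dots,r_{p})$ that equals $\mathrm{Card}\,V_{E}(r_{1},\dots,r_{p})$ for all large $r$ and to recognize it as the right-hand side of (4.4). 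Recall that $v=(v_{1},\dots,v_{m})\in V_{E}$ if and only if $v$ is not $\geq_{P}$ any $e_{i}$, i.e. for every $i$ there is a coordinate $j$ with $v_{j}<e_{ij}$. I would count $V_{E}(r_{1},\dots,r_{p})$ by complementation inside the finite region $R(r_{1},\dots,r_{p})=\{v\in\mathbf{N}^{m}:\mathrm{ord}_{k}(v)\leq r_{k},\ k=1,\dots,p\}$, subtracting those $v$ that dominate at least one $e_{i}$.

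First I would record the basic counting fact: for each block $N_{k}$ the number of $(v_{h})_{h\in N_{k}}\in\mathbf{N}^{m_{k}}$ with $\sum_{h\in N_{k}}v_{h}\leq r_{k}$ equals $\binom{r_{k}+m_{k}}{m_{k}}$ (stars and bars), and since the constraints $\mathrm{ord}_{k}(v)\leq r_{k}$ decouple across the blocks of the partition (4.2), $\mathrm{Card}\,R(r_{1},\dots,r_{p})=\prod_{k=1}^{p}\binom{r_{k}+m_{k}}{m_{k}}$. Next, for a nonempty $\sigma\subseteq\mathbf{N}_{q}$, the set $A_{\sigma}$ of $v\in R(r_{1},\dots,r_{p})$ with $v\geq_{P}e_{i}$ for all $i\in\sigma$ is exactly the set of $v$ in the region with $v_{j}\geq\bar{e}_{\sigma j}$ for every $j$, where $\bar{e}_{\sigma j}=\max\{e_{ij}:i\in\sigma\}$. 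The substitution $v_{j}=\bar{e}_{\sigma j}+w_{j}$ is a bijection of $A_{\sigma}$ onto $\{w\in\mathbf{N}^{m}:\sum_{h\in N_{k}}w_{h}\leq r_{k}-b_{\sigma k}\}$ with $b_{\sigma k}=\sum_{h\in N_{k}}\bar{e}_{\sigma h}$, provided $r_{k}\geq b_{\sigma k}$ for all $k$; by the same block-wise count this gives $\mathrm{Card}\,A_{\sigma}=\prod_{k=1}^{p}\binom{r_{k}+m_{k}-b_{\sigma k}}{m_{k}}$.

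Finally I would assemble the pieces by inclusion--exclusion. Writing $A_{i}$ for the set of $v\in R(r_{1},\dots,r_{p})$ with $v\geq_{P}e_{i}$, one has $\mathrm{Card}\,V_{E}(r_{1},\dots,r_{p})=\mathrm{Card}\,R-\mathrm{Card}\bigcup_{i=1}^{q}A_{i}$, and the inclusion--exclusion formula for the union, combined with the count of $\mathrm{Card}\,A_{\sigma}$ above and the conventions $\bar{e}_{\emptyset j}=0$, $b_{\emptyset k}=0$ (so that the $\sigma=\emptyset$ term reproduces $\mathrm{Card}\,R$), yields
\[
\mathrm{Card}\,V_{E}(r_{1},\dots,r_{p})=\sum_{l=0}^{q}(-1)^{l}\sum_{\sigma\in\Gamma(l,q)}\prod_{j=1}^{p}\binom{r_{j}+m_{j}-b_{\sigma j}}{m_{j}}
\]
for all $(r_{1},\dots,r_{p})$ with $r_{k}\geq\max_{\sigma}b_{\sigma k}$. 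The right-hand side is visibly a numerical polynomial in $t_{1},\dots,t_{p}$ (a $\mathbf{Z}$-linear combination of products of binomial polynomials); since it agrees with $\omega_{E}$ at all sufficiently large integer points and such values determine a polynomial, it equals $\omega_{E}(t_{1},\dots,t_{p})$, which is (4.4).

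The step I expect to be the main obstacle is the exactness of the shift bijection and the resulting binomial count: the equality $\mathrm{Card}\,A_{\sigma}=\prod_{k}\binom{r_{k}+m_{k}-b_{\sigma k}}{m_{k}}$ holds only once every $r_{k}$ exceeds $b_{\sigma k}$, so the derived identity is valid only for $(r_{1},\dots,r_{p})$ large, and I would need the determinacy of numerical polynomials by their large values to conclude the polynomial identity. The bookkeeping of signs in the inclusion--exclusion and the verification that the empty-set term correctly accounts for the whole region $R$ are routine but must be handled carefully so that the final index $l$ runs from $0$ to $q$ with exactly the stated signs.
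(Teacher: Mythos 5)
Your argument is correct and is exactly the standard inclusion--exclusion proof: the paper does not prove Theorem 4.5 itself but cites \cite[Chapter 2]{KLMP}, where the formula is established by the same reduction of $\mathrm{Card}\,V_{E}(r_{1},\dots,r_{p})$ to a complement-and-shift lattice-point count, with the polynomial identity then following because a numerical polynomial is determined by its values at all sufficiently large integer points. The one point you flag as delicate --- that $\mathrm{Card}\,A_{\sigma}=\prod_{k}\binom{r_{k}+m_{k}-b_{\sigma k}}{m_{k}}$ only once $r_{k}\geq b_{\sigma k}$ for every $k$ --- is handled exactly as you propose, so there is no gap.
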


{\bf Remark.} \, It is clear that if $E\subseteq {\bf N}^{m}$ and $E^{\ast}$ is the set of all minimal elements of the set $E$ with respect to the product order on ${\bf N}^{m}$, then the set $E^{\ast}$ is finite and $\omega_{E}(t_{1}, \dots, t_{p}) = \omega_{E^{\ast}}(t_{1}, \dots, t_{p})$. Thus, Theorem 4.5 gives an algorithm that allows one to find a
numerical polynomial associated with any subset of  ${\bf N}^{m}$  (and with a given partition of the set $\{1,\dots, m\}$): one should first find the set of all minimal points of the subset and then apply Theorem 4.5.

\medskip

The following result can be obtained precisely in the same way as Theorem 3.4 of  ~\cite{Levin2} (the only difference is that the proof in the mentioned paper uses Theorem 3.2 of ~\cite{Levin2} in the case  $p=2$, while the proof of the theorem below should refer to the Theorem 3.2 of ~\cite{Levin2} where $p$ is any positive integer).

\begin{theorem}
Let $A\subseteq {\bf N}^{m}\times{\bf Z}^{n}$ and let partition {\em (4.2)} of ${\bf N}_{m}$ be fixed. Then there exists a numerical polynomial $\phi_{A}(t_{1},\dots, t_{p+1})$ in $p+1$ variables such that

{\em (i)}\, $\phi_{A}(r_{1},\dots, r_{p+1}) = Card\,W_{A}(r_{1},\dots, r_{p+1})$ for all sufficiently large

\noindent$(r_{1},\dots, r_{p+1})\in {\bf N}^{p+1}$.

\medskip

{\em (ii)}\, $deg_{t_{i}}\phi_{A}\leq m_{i}$ ($1\leq i\leq p$), $deg_{t_{p+1}}\phi_{A}\leq n$ and the coefficient of $t_{1}^{m_{1}}\dots t_{p}^{m_{p}}t_{p+1}^{n}$ in $\phi_{A}$ is of the form $\D\frac{2^{n}a}{m_{1}!\dots m_{p}!n!}$\, with $a\in {\bf Z}$.

\medskip

{\em (iii)}\, Let us consider a mapping $\rho: {\bf N}^{m}\times {\bf Z}^{n}\longrightarrow{\bf N}^{m+2n}$
such that

\smallskip

$\rho((e_{1},\dots, e_{m+n}) =(e_{1},\dots, e_{m}, \max \{e_{m+1}, 0\}, \dots, \max \{e_{m+n}, 0\},$

$\max \{-e_{m+1}, 0\}, \dots, \max \{-e_{m+n}, 0\})$.

\smallskip

Let $B = \rho(A)\bigcup \{\bar{e}_{1},\dots, \bar{e}_{n}\}$ where $\bar{e}_{i}$
($1\leq i\leq n$) is a $(m+2n)$-tuple in ${\bf N}^{m+2n}$ whose
$(m+i)$th and $(m+n+i)$th coordinates are equal to 1 and all other
coordinates are equal to 0.
Then $\phi_{A}(t_{1}, \dots, t_{p+1}) = \omega_{B}(t_{1}, \dots, t_{p+1})$ where
$\omega_{B}(t_{1},\dots,  t_{p+1})$ is the dimension polynomial of the set $B$
(see {\em Definition 4.4}) associated with the partition ${\bf N}_{m+2n} =
\{1,\dots , m_{1}\}\bigcup \{m_{1}+1,\dots , m_{1}+m_{2}\}\bigcup\dots
\bigcup \{m_{1}+\dots +m_{p-1}+1,\dots , m\}\bigcup \{m+1,\dots, m+2n\}$ of the
set ${\bf N}_{m+2n}$.

\smallskip

{\em (iv)}\, If $A = \emptyset$, then
\begin{equation}\phi_{A}(t_{1}, \dots, t_{p+1}) ={{t_{1}+m_{1}}\choose m_{1}}\dots {{t_{p}+m_{p}}\choose m_{p}}
\sum_{i=0}^{n}(-1)^{n-i}2^{i}{n\choose i}{{t_{p+1}+i}\choose i}.\end{equation}
\end{theorem}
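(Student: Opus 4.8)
The plan is to prove Theorem 4.7 by reducing the problem of counting elements of $W_A(r_1,\dots,r_{p+1})$ in ${\bf N}^m\times{\bf Z}^n$ to the purely non-negative situation of Theorem 4.3 and Theorem 4.5, using the ``unfolding'' map $\rho$ that separates each signed coordinate into its positive and negative parts. The central observation driving everything is part (iii): the map $\rho$ faithfully encodes the order $\unlhd$, because two points lie in the same orthant and compare under $\unlhd$ precisely when their $\rho$-images compare under the ordinary product order on ${\bf N}^{m+2n}$. So I would first establish this equivalence carefully, and then show that once one adjoins the extra tuples $\bar e_1,\dots,\bar e_n$ (which forbid any image-point from having both its $(m+i)$th and $(m+n+i)$th coordinates positive simultaneously), the set $V_B$ in ${\bf N}^{m+2n}$ is in an order- and grading-preserving bijection with $W_A$ in ${\bf N}^m\times{\bf Z}^n$.

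The key steps, in order, are as follows. First I would verify the combinatorial heart of (iii): show that $w\in W_A$ if and only if $\rho(w)\in V_B$, where $V_B$ is taken in ${\bf N}^{m+2n}$ with respect to the product order. The adjoined points $\bar e_i$ are what make $\rho$ a bijection onto the ``legal'' region, since a genuine image $\rho(e)$ never has both the $(m+i)$th and $(m+n+i)$th entries nonzero, so requiring $v\not\geq_P\bar e_i$ forces exactly one of each such pair of coordinates to vanish, matching the orthant structure. Second, I would check that $\rho$ preserves the relevant orders: $ord_k$ for $k\le p$ is unchanged since it only involves the first $m$ coordinates, while $ord_{p+1}$ of a point equals the sum of the last $2n$ coordinates of its image (because $|e_{m+i}| = \max\{e_{m+i},0\}+\max\{-e_{m+i},0\}$). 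This means the last block $\{m+1,\dots,m+2n\}$ of the partition of ${\bf N}_{m+2n}$ correctly tracks $ord_{\sigma}$. Combining these two facts gives $Card\,W_A(r_1,\dots,r_{p+1}) = Card\,V_B(r_1,\dots,r_{p+1})$ for all $(r_1,\dots,r_{p+1})$, whence Theorem 4.3 supplies the polynomial $\omega_B$ and establishes (i) together with the equality $\phi_A=\omega_B$ asserted in (iii). The degree bounds in (ii) then follow from part (ii) of Theorem 4.3 applied to the partition of ${\bf N}_{m+2n}$, with the last block of size $2n$ contributing $deg_{t_{p+1}}\le 2n$; refining this bound down to $n$, and extracting the factor $2^n$ in the leading coefficient, requires the extra structure imposed by the $\bar e_i$.

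For part (iv) I would specialize to $A=\emptyset$, so that $B=\{\bar e_1,\dots,\bar e_n\}$, and compute $\omega_B$ directly from the explicit alternating-sum formula (4.5) of Theorem 4.5. Here $q=n$, and for a subset $\sigma\subseteq\{1,\dots,n\}$ of size $l$ the componentwise maxima $\bar e_{\sigma h}$ are $1$ exactly in the $2l$ coordinates indexed by $\{m+i,m+n+i : i\in\sigma\}$ and $0$ elsewhere; hence $b_{\sigma k}=0$ for $k\le p$ and $b_{\sigma,p+1}=2l$. Plugging into (4.5) gives $\prod_{k=1}^p\binom{t_k+m_k}{m_k}$ times $\sum_{l=0}^n(-1)^l\binom{n}{l}\binom{t_{p+1}+2n-2l}{2n}$, and a binomial-identity computation (of the type that appears in (4.6)) collapses the $t_{p+1}$-factor into $\sum_{i=0}^n(-1)^{n-i}2^i\binom{n}{i}\binom{t_{p+1}+i}{i}$. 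The main obstacle I anticipate is not the existence of $\phi_A$ — that is essentially immediate once the bijection $W_A\leftrightarrow V_B$ is pinned down and Theorem 4.3 is invoked — but rather the sharpening in (ii): proving that $deg_{t_{p+1}}\phi_A\le n$ rather than the naive $2n$, and that the top coefficient carries a factor $2^n$. This is exactly where the geometry of the orthant decomposition and the role of the points $\bar e_i$ must be used quantitatively, and I would handle it by the explicit-formula route (as in (iv)) to see the cancellations that lower the degree from $2n$ to $n$, much as the excerpt's remark reduces $\omega_E$ to $\omega_{E^\ast}$ via minimal elements before applying Theorem 4.5.
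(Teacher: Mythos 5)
The paper itself offers no proof of this theorem---it defers to Theorem 3.4 of \cite{Levin2}---and your argument (unfolding ${\bf N}^{m}\times{\bf Z}^{n}$ into ${\bf N}^{m+2n}$ via $\rho$, checking that $a\unlhd w$ iff $\rho(a)\leq_{P}\rho(w)$, identifying the image of $\rho$ with $V_{\{\bar{e}_{1},\dots,\bar{e}_{n}\}}$ so that $W_{A}(r_{1},\dots,r_{p+1})$ is in a grading-preserving bijection with $V_{B}(r_{1},\dots,r_{p+1})$, and then invoking Theorems 4.3 and 4.5) is exactly that standard route, correctly executed. The one point you leave schematic, the sharpening in (ii) from $\deg_{t_{p+1}}\leq 2n$ to $\leq n$ and the divisibility of the leading coefficient by $2^{n}$, is indeed where the real work lies; the clean mechanism is inclusion--exclusion over the $2^{n}$ orthants ${\bf Z}_{j}^{(n)}$, each of which is order-isomorphic to ${\bf N}^{n}$ so that each piece of $W_{A}$ contributes $t_{p+1}$-degree at most $n$, and your explicit-formula computation in (iv) is the $A=\emptyset$ instance of precisely that cancellation.
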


The polynomial $\phi_{A}(t_{1},\dots, t_{p+1})$ is called the {\em dimension polynomial} of the
set $A\subseteq {\bf N}^{m}\times{\bf Z}^{n}$ associated with partition (4.2) of ${\bf N}_{m}$.

\section{Proof of the main theorem and computation of difference-differential dimension
polynomials via characteristic sets}
\setcounter{equation}{0}
In this section we prove Theorem 3.1 and give a method of computation of difference-differential
dimension polynomials of $\Delta$-$\sigma$-field extensions based on
constructing a characteristic set of the defining prime $\Delta$-$\sigma$-ideal of the
extension.

In what follows we use the conventions  of section 3. In particular, we assume that partition (3.1) of the set of basic derivations $\Delta = \{\delta_{1},\dots, \delta_{m}\}$ is fixed.

\medskip
Let us consider $p+1$ total orderings $<_{1}, \dots, <_{p}, <_{\sigma}$ of the
set of power products $\Lambda$ such that

\medskip

\noindent$\lambda  = \delta_{1}^{k_{1}}\dots \delta_{m}^{k_{m}}
\alpha_{1}^{l_{1}}\dots \alpha_{n}^{l_{n}}
<_{i} \lambda'  = \delta_{1}^{k'_{1}}\dots
\delta_{m}^{k'_{m}}\alpha_{1}^{l'_{1}}\dots\alpha_{n}^{l'_{n}}$  ($1\leq i\leq p$)  if and only if

\medskip

\noindent $(ord_{i}\lambda, ord\,\lambda, ord_{1}\lambda,\dots, ord_{i-1}\lambda, ord_{i+1}\lambda, \dots, ord_{p}\lambda,
ord_{\sigma}\lambda,  k_{m_{1}+\dots + m_{i-1}+1},\dots,$

 \medskip

\noindent$k_{m_{1}+\dots +m_{i}},\, k_{1},\dots, k_{m_{1}+\dots +
m_{i-1}}, k_{m_{1}+\dots + m_{i}+1},\dots, k_{m}, |l_{1}|,\dots, |l_{n}|, l_{1},\dots, l_{n})$ is

\medskip

\noindent less than
$(ord_{i}\lambda', ord\,\lambda', ord_{1}\lambda',\dots, ord_{i-1}\lambda', ord_{i+1}\lambda', \dots, ord_{p}\lambda', ord_{\sigma}\lambda',$

\medskip

\noindent$ k'_{m_{1}+\dots + m_{i-1}+1},\dots, k'_{m_{1}+\dots +m_{i}},\, k'_{1},\dots, k'_{m_{1}+\dots +
m_{i-1}}, k'_{m_{1}+\dots + m_{i}+1},\dots,$

\medskip

\noindent$k'_{m}, |l'_{1}|,\dots, |l'_{n}|, l'_{1},\dots, l'_{n})$ with respect to the lexicographic order on ${\bf N}^{m+2n+p+2}$.

\medskip

Similarly, $\lambda <_{\sigma} \lambda'$ if and only if $(ord_{\sigma}\lambda, ord\,\lambda, ord_{1}\lambda,\dots, ord_{p}\lambda,
|l_{1}|,\dots, |l_{n}|,$

\medskip

\noindent$l_{1},\dots, l_{n}, k_{1},\dots, k_{m})$ is less than the corresponding $(m+2n+p+2)$-tuple for $\lambda'$
with respect to the lexicographic order on ${\bf N}^{m+2n+p+2}$.

\medskip

Two elements $\lambda_{1} = \delta_{1}^{k_{1}}\dots \delta_{m}^{k_{m}}
\alpha_{1}^{l_{1}}\dots \alpha_{n}^{l_{n}}$ and
$\lambda_{2} = \delta_{1}^{r_{1}}\dots \delta_{m}^{r_{m}}\alpha_{1}^{s_{1}}
\dots \alpha_{n}^{s_{n}}$ in $\Lambda$ are called {\em similar\/}, if
the $n$-tuples $(l_{1}, \dots, l_{n})$ and $(s_{1}, \dots, s_{n})$
belong to the same orthant of ${\bf Z}^{n}$ (see (4.3)\,). In this case we
write $\lambda_{1}\sim \lambda_{2}$. We say that $\lambda_{1}$ {\em divides} $\lambda_{2}$
(or $\lambda_{2}$ is a {\em multiple} of $\lambda_{1}$) and write $\lambda_{1}|\lambda_{2}$
if $\lambda_{1}\sim \lambda_{2}$ and there exists
$\lambda \in \Lambda$ such that $\lambda\sim \lambda_{1}, \,\lambda\sim \lambda_{2}$ and $\lambda_{2} = \lambda\lambda_{1}$.

\smallskip

Let $K$ be a difference-differential field ($Char\,K = 0$) with the basic sets $\Delta$ and $\sigma$ described above
(and partition (3.1) of the set $\Delta$). Let $K\{y_{1},\dots, y_{s}\}$ be the ring of
$\Delta$-$\sigma$-polynomials over $K$ and let $\Lambda Y$ denote
the set of all elements $\lambda y_{i}$ ($\lambda\in \Lambda$, $1\leq i\leq s$) called {\em terms}.
Note that as a ring, $K\{y_{1},\dots, y_{s}\} = K[\Lambda Y]$. Two terms $u=\lambda y_{i}$ and $v=\lambda' y_{j}$
are called {\em similar} if $\lambda$ and $\lambda'$ are similar; in this case we write $u\sim v$.
If $u = \lambda y_{i}$ is a term and $\lambda'\in \Lambda$, we say that $u$ is similar to $\lambda'$ and write
$u\sim \lambda'$ if $\lambda\sim \lambda'$. Furthermore, if $u, v\in \Lambda Y$, we say that $u$ {\em divides} $v$ or
{\em $v$ is a multiple of $u$}, if $u=\lambda' y_{i}$, $v=\lambda'' y_{i}$ for some $y_{i}$ and $\lambda'|\lambda''$.
(If $\lambda'' = \lambda\lambda'$ for some $\lambda\in \Lambda, \,\lambda\sim \lambda'$, we write $\D\frac{v}{u}$ for $\lambda$.)

\smallskip

Let us consider $p+1$ orders $<_{1},\dots, <_{p}, <_{\sigma}$ on the set $\Lambda Y$ that
correspond to the orders on the semigroup $\Lambda$ (we use the same symbols for the orders on $\Lambda$
and $\Lambda Y$). These orders  are defined as follows: $\lambda y_{j} <_{i}$ (or $<_{\sigma}$) $\lambda' y_{k}$ if and only if
$\lambda <_{i}$ (respectively, $<_{\sigma}$)$\lambda'$ in $\Lambda$ or $\lambda = \lambda'$ and $j < k$ ($1\leq i\leq p,\, 1\leq j, k\leq s$).

\smallskip

The order of a term $u = \lambda y_{k}$ and its orders with respect to the sets $\Delta_{i}$ ($1\leq i\leq p$) and $\sigma$ are defined as the corresponding orders of $\lambda$ (we use the same notation $ord\,u$, $ord_{i}u$, and $ord_{\sigma}u$ for the corresponding orders).

\smallskip

If $A\in K\{y_{1},\dots, y_{s}\}\setminus K$ and $1\leq k\leq p$, then the highest with respect to $<_{k}$ term that appears in $A$ is called the {\em $k$-leader} of $A$. It is denoted by $u_{A}^{(k)}$. The highest term of $A$ with respect to $<_{\sigma}$ is called the {\em $\sigma$-leader} of $A$; it is denoted by $v_{A}$. If $A$ is written as a polynomial in  $v_{A}$, $A = I_{d}{(v_{A})}^{d} + I_{d-1}{(v_{A})}^{d-1} + \dots + I_{0}$, where all terms of $I_{0},\dots, I_{d}$ are less than $v_{A}$ with respect to $<_{\sigma}$, then $I_{d}$ is called the {\em initial\/} of $A$. The partial derivative $\partial A/\partial v_{A} = dI_{d}(v_{A})^{d-1} + (d-1)I_{d-1}{(v_{A})}^{d-2} + \dots + I_{1}$ is called the {\em separant\/} of $A$. The initial and the separant of a $\Delta$-$\sigma$-polynomial $A$ are denoted by $I_{A}$ and $S_{A}$, respectively.

\smallskip

If $A, B\in K\{y_{1},\dots, y_{s}\}$, then $A$ is said to have lower rank than $B$ (we write $rk\,A < rk\,B$) if either $A\in K$, $B\notin K$, or $(v_{A}, deg_{v_{A}}A, ord_{1}u_{A}^{(1)},\dots, ord_{p}u_{A}^{(p)})$ is less than $(v_{B}, deg_{v_{B}}B, ord_{1}u_{B}^{(1)},\dots, ord_{p}u_{B}^{(p)})$ with respect to the lexicographic order ($v_{A}$ and $v_{B}$ are compared with respect to $<_{\sigma}$). If the vectors are equal (or $A, B\in K$) we say that $A$ and $B$ are of the same rank and write $rk\,A = rk\, B$.

\begin{definition}
If $A, B\in K\{y_{1},\dots, y_{s}\}$, then $B$ is said to be
reduced with respect to $A$ if

{\em (i)} $B$ does not contain terms $\lambda v_{A}$ such that $\lambda\sim v_{A}$,
$\lambda_{\Delta}\neq 1$, and  $ord_{i}(\lambda u_{A}^{(i)})\leq ord_{i}u_{B}^{(i)}$
for $i=1,\dots, p$.

{\em (ii)} If $B$ contains a term $\lambda v_{A}$, where $\lambda\sim v_{A}$ and
$\lambda_{\Delta} = 1$, then either there exists
$j, \,  1\leq j\leq p$, such that  $ord_{j} u_{B}^{(j)} < ord_{j}(\lambda u_{A}^{(j)})$
or $ord_{j}(\lambda u_{A}^{(j)}) \leq ord_{j} u_{B}^{(j)}$
for all $j = 1,\dots, p$ and $deg_{\lambda v_{A}}B < deg_{v_{A}}A$.
\end{definition}

If $B\in K\{y_{1},\dots, y_{s}\}$, then $B$ is said to be {\em reduced with respect to a set\, $\Sigma \subseteq K\{y_{1},\dots, y_{s}\}$} if $B$ is reduced with respect to every element of $\Sigma$.

\medskip

A set  $\Sigma \subseteq K\{y_{1},\dots, y_{s}\}$ is called {\em autoreduced} if $\Sigma \bigcap K = \emptyset$ and every element of $\Sigma$ is reduced with respect to any other element of this set.

The proof of the following lemma can be found in ~\cite[Chapter0, Section 17]{K2}.
\begin{lemma}
Let $A$ be any infinite subset of ${\bf N}^{m}\times{\bf N}_{n}$ ($m,n\in {\bf N}$, $n\geq 1$). Then there exists an infinite sequence of elements of $A$, strictly increasing relative to the product order, in which every element has the same projection on ${\bf N}_{n}$.
\end{lemma}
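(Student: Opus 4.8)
The plan is to reduce the statement to the well-partial-order property of ${\bf N}^{m}$ under the product order $\leq_{P}$ (Dickson's Lemma), combined with a pigeonhole argument that disposes of the finite projection onto ${\bf N}_{n}$. The point is that strictness of the increase will come for free once we know the selected points are pairwise distinct, so the whole burden rests on extracting an infinite $\leq_{P}$-non-decreasing subsequence.

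First I would separate the two factors. Since the projection onto ${\bf N}_{n}$ takes only finitely many values while $A$ is infinite, the infinite pigeonhole principle produces an index $j\in {\bf N}_{n}$ whose fiber $A_{j}=\{e\in {\bf N}^{m}\mid (e,j)\in A\}$ is infinite. Every element of $A_{j}$ carries the same ${\bf N}_{n}$-component $j$, so it suffices to produce an infinite sequence in $A_{j}$ that is strictly increasing for $<_{P}$ on ${\bf N}^{m}$; reattaching the common component $j$ then yields the required sequence in $A$ with constant projection.

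Next I would establish the combinatorial core: every infinite sequence in ${\bf N}^{m}$ admits an infinite $\leq_{P}$-non-decreasing subsequence. I would argue by induction on $m$. For $m=1$ the image of the sequence in ${\bf N}$ is either finite, in which case some value recurs infinitely often and yields a constant (hence non-decreasing) subsequence, or infinite, in which case its members can be listed in strictly increasing order. For the inductive step, given a sequence in ${\bf N}^{m}$, I first extract by the $m=1$ case an infinite subsequence along which the first coordinate is non-decreasing; projecting this subsequence onto its last $m-1$ coordinates and invoking the induction hypothesis gives a further infinite subsequence that is non-decreasing there as well. Along this final subsequence all $m$ coordinates are non-decreasing, so the tuples are non-decreasing for $\leq_{P}$.

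Finally, I would apply this fact to an enumeration of the infinite set $A_{j}$, obtaining a non-decreasing subsequence $e^{(1)}\leq_{P}e^{(2)}\leq_{P}\cdots$. Because $A_{j}$ is a set, its enumeration has pairwise distinct terms, hence so does the subsequence; thus $e^{(k)}\neq e^{(k+1)}$, and together with $e^{(k)}\leq_{P}e^{(k+1)}$ this upgrades to $e^{(k)}<_{P}e^{(k+1)}$ for every $k$, giving a strictly increasing sequence. I expect the only real subtlety to be the nested extraction in the inductive step---carefully threading one infinite subsequence inside another and verifying that the common projection $j$ is preserved throughout---whereas the passage from non-decreasing to strictly increasing is immediate once the distinctness of the points of $A_{j}$ is used.
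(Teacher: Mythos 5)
Your proof is correct and follows the standard argument; the paper itself gives no proof of this lemma, deferring to Kolchin (reference [K2], Chapter 0, Section 17), where essentially the same combination of pigeonhole on the finite factor ${\bf N}_{n}$ and induction on $m$ for the product order on ${\bf N}^{m}$ appears. One small imprecision to repair: in the base case $m=1$ with infinite image, listing the \emph{values} in increasing order does not by itself yield a subsequence (the indices must also increase); instead choose indices greedily, noting that the image of every tail of the sequence is still infinite and hence unbounded, so a later index with a strictly larger value always exists.
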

This lemma implies the following statement that will be used below.
\begin{lemma}
Let $S$ be any infinite set of terms $\lambda y_j$ ($\lambda\in \Lambda, 1\leq j\leq s$) in $K\{y_{1},\dots, y_{s}\}$.   Then there exists an index $j$ ($1\leq j\leq s$) and an infinite sequence of terms $\lambda_{1}y_{j}, \lambda_{2}y_{j}, \dots, \lambda_{k}y_{j},\dots $ such that $\lambda_{k}|\lambda_{k+1}$ for every $k=1, 2, \dots $.
\end{lemma}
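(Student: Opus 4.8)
The plan is to reduce the statement to the preceding lemma (Lemma 5.2) by encoding each term as a point of ${\bf N}^{m+n}$ that carries a finite label recording both the indeterminate $y_j$ and the orthant of the $\sigma$-part of $\lambda$. Concretely, to a term $\lambda y_j$ with $\lambda = \delta_1^{k_1}\cdots\delta_m^{k_m}\alpha_1^{l_1}\cdots\alpha_n^{l_n}$ I associate the pair
\[
\Phi(\lambda y_j) = \bigl((k_1,\dots,k_m,|l_1|,\dots,|l_n|),\,(j,\varepsilon(\lambda))\bigr),
\]
where $\varepsilon(\lambda)\in\{+,-\}^{n}$ records the sign pattern of $(l_1,\dots,l_n)$, with the convention that the $\nu$-th entry is $+$ when $l_\nu\geq 0$ and $-$ when $l_\nu<0$. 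The first component lies in ${\bf N}^{m+n}$ and the second in the finite set $\{1,\dots,s\}\times\{+,-\}^{n}$ of cardinality $s\,2^{n}$. Since the absolute values together with the sign pattern recover $(l_1,\dots,l_n)$, the map $\Phi$ is injective; hence $A=\Phi(S)$ is an \emph{infinite} subset of ${\bf N}^{m+n}\times{\bf N}_{s2^{n}}$.

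The key computation, which I carry out first, is that $\Phi$ translates divisibility of terms into the product order on ${\bf N}^{m+n}$ together with equality of labels. Divisibility $\lambda_1 y_i\mid \lambda_2 y_j$ forces $i=j$ and $\lambda_1\mid\lambda_2$, and by definition the latter holds if and only if $\lambda_1\sim\lambda_2$ and $\lambda_2=\lambda\lambda_1$ for some $\lambda\in\Lambda$ with $\lambda\sim\lambda_1$. Writing $\lambda_1 = \delta_1^{k_1}\cdots\alpha_n^{l_n}$ and $\lambda_2 = \delta_1^{k'_1}\cdots\alpha_n^{l'_n}$ and comparing exponents, such a $\lambda$ exists precisely when $k_\mu\le k'_\mu$ for $1\le\mu\le m$ and, for every $\nu$, $l_\nu$ and $l'_\nu$ lie in the same sign class with $|l_\nu|\le|l'_\nu|$. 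Consequently $\lambda_1 y_i\mid\lambda_2 y_i$ holds if and only if $\Phi(\lambda_1 y_i)$ and $\Phi(\lambda_2 y_i)$ carry the same label and the ${\bf N}^{m+n}$-component of the first is $\le_P$ that of the second. I will verify both directions of this equivalence directly from the orthant bookkeeping, the only mildly delicate point being coordinates with $l_\nu=0$, which the sign convention above resolves so that the equivalence is genuine and not merely one-sided.

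With these two observations in place, I apply Lemma 5.2 to the infinite set $A\subseteq {\bf N}^{m+n}\times{\bf N}_{s2^{n}}$, taking $m+n$ in the role of $m$ and $s\,2^{n}$ in the role of $n$. It yields an infinite sequence of elements of $A$, strictly increasing for the product order on ${\bf N}^{m+n}$ and all sharing the same projection onto the finite label ${\bf N}_{s2^{n}}$. Pulling this sequence back through the injection $\Phi$ produces terms $\lambda_1 y_j,\lambda_2 y_j,\dots$ with a common indeterminate $y_j$, a common sign pattern, and absolute-value tuples increasing in the product order; by the equivalence established above, this is exactly a chain with $\lambda_k\mid\lambda_{k+1}$ for every $k$, as required.

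The main obstacle is not the combinatorics — that is supplied wholesale by Lemma 5.2 — but the faithful encoding of the ${\bf Z}^{n}$-component. Because the orthants in the decomposition (4.3) overlap on the coordinate hyperplanes $l_\nu=0$, one must fix a single sign convention for vanishing entries so that the label set stays finite, so that $\Phi$ remains injective (hence $A$ infinite), and so that ``equal label plus product-order domination'' \emph{coincides} with divisibility rather than merely implying it. Once that convention is pinned down, the remaining arguments are routine.
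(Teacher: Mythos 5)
Your reduction to Lemma 5.2 --- encoding each term by its exponent absolute values in ${\bf N}^{m+n}$ together with a finite label $(j,\varepsilon(\lambda))\in\{1,\dots,s\}\times\{+,-\}^{n}$ --- is exactly the derivation the paper intends: it states Lemma 5.3 with no proof beyond the remark that Lemma 5.2 implies it, and your argument supplies that implication correctly. The one inaccuracy is your claim that ``equal label plus product-order domination'' is \emph{equivalent} to divisibility: the ``only if'' direction fails on the coordinate hyperplanes. For instance with $n=1$, $\lambda_{1}=1$ and $\lambda_{2}=\alpha_{1}^{-3}$, one has $\lambda_{1}\mid\lambda_{2}$ (both $\sigma$-parts lie in ${\bf Z}_{-}$, as does the quotient's), yet your convention assigns $\lambda_{1}$ the label $+$ and $\lambda_{2}$ the label $-$; no choice of sign convention for $l_{\nu}=0$ can repair this, because the orthants genuinely overlap there and divisibility only requires membership in \emph{some} common orthant. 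This does not damage the proof: the sequence produced by Lemma 5.2 has equal labels and increasing absolute-value tuples, and the ``if'' direction (which is the one you actually verify coordinate-by-coordinate, and which is true) is all that is needed to conclude $\lambda_{k}\mid\lambda_{k+1}$. You should simply state that implication rather than a biconditional.
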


\begin{proposition}
Every autoreduced set is finite.
\end{proposition}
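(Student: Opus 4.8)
The plan is to run the Ritt--Kolchin finiteness argument, but one has to confront a genuine complication: Definition 5.1 does \emph{not} forbid every multiple of the $\sigma$-leader. A multiple $\lambda v_{A}$ occurring in $B$ is harmless unless its $\Delta_{i}$-orders are compatible with the $i$-leaders (condition (i)), and in the $\Delta$-trivial case it is controlled instead by a degree inequality (condition (ii)). Consequently a bare divisibility chain among $\sigma$-leaders (which Lemma 5.3 readily supplies) is \emph{not} by itself a contradiction. The real content is to attach to each $\Delta$-$\sigma$-polynomial a tuple of finitely many nonnegative integers so that ``reduced'' becomes exactly ``incomparable in a product order,'' after which a well-partial-order (Dickson / Lemma 5.2) argument closes the proof.

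First I would attach to each element $A$ of a hypothetical infinite autoreduced set $\Sigma$ the following data: the orthant $\omega_{A}$ of ${\bf Z}^{n}$ containing the $\alpha$-exponents of $v_{A}$; the index $j_{A}$ with $v_{A}=\lambda_{A}y_{j_{A}}$; the point $P_{A}=(k_{1},\dots,k_{m},|l_{1}|,\dots,|l_{n}|)\in{\bf N}^{m+n}$ recording the exponents of $\lambda_{A}$; the degree $e_{A}=deg_{v_{A}}A$; and the \emph{gap vector} $g(A)=(g_{1}(A),\dots,g_{p}(A))$ with $g_{k}(A)=ord_{k}u_{A}^{(k)}-ord_{k}v_{A}$. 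The key preliminary point is that $g_{k}(A)\ge 0$: because $<_{k}$ orders first by $ord_{k}$, the $k$-leader $u_{A}^{(k)}$ maximizes $ord_{k}$ over all terms of $A$, and $v_{A}$ is one of those terms; hence $g(A)\in{\bf N}^{p}$.

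Next, since $(\omega_{A},j_{A})$ ranges over a set of only $2^{n}s$ values, pigeonhole yields an infinite $\Sigma_{0}\subseteq\Sigma$ on which the orthant and the index $j$ are constant; there the similarity condition is automatic, so $\lambda_{A}\,|\,\lambda_{B}$ is exactly $P_{A}\le_{P}P_{B}$. Enumerating $\Sigma_{0}=(A_{1},A_{2},\dots)$ and mapping $A_{n}\mapsto(P_{A_{n}},g(A_{n}),e_{A_{n}})\in{\bf N}^{m+n+p+1}$, I invoke the fact that the product order on ${\bf N}^{m+n+p+1}$ is a well-partial-order (Dickson's lemma; cf. Lemma 5.2): some $n<n'$ give $P_{A}\le_{P}P_{B}$, $g(A)\le_{P}g(B)$, and $e_{A}\le e_{B}$ for the distinct elements $A=A_{n}$, $B=A_{n'}$. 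From $P_{A}\le_{P}P_{B}$ I get $v_{A}\,|\,v_{B}$, so $B$ contains the term $v_{B}=\lambda v_{A}$ with $\lambda=v_{B}/v_{A}\sim v_{A}$, precisely the hypothesis of Definition 5.1.

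It remains to contradict the reducedness of $B$ with respect to $A$, splitting on $\lambda$. If $\lambda_{\Delta}\ne 1$, condition (i) forces some $i_{0}$ with $ord_{i_{0}}(\lambda u_{A}^{(i_{0})})>ord_{i_{0}}u_{B}^{(i_{0})}$; using additivity of $ord_{i_{0}}$ over $\Lambda$ together with $ord_{i_{0}}\lambda=ord_{i_{0}}v_{B}-ord_{i_{0}}v_{A}$, this rearranges to $g_{i_{0}}(B)<g_{i_{0}}(A)$, contradicting $g(A)\le_{P}g(B)$. If $\lambda_{\Delta}=1$ (including $\lambda=1$, the equal-$\sigma$-leader case), then $ord_{k}\lambda=0$ for all $k\le p$ and condition (ii) forces either some $i_{0}$ with $g_{i_{0}}(B)<g_{i_{0}}(A)$ --- again impossible --- or $g(A)\le_{P}g(B)$ \emph{and} $deg_{v_{B}}B<deg_{v_{A}}A$, i.e. $e_{B}<e_{A}$, contradicting $e_{A}\le e_{B}$. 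Either way $B$ fails to be reduced with respect to $A$, contradicting autoreducedness, so $\Sigma$ must be finite. I expect the last paragraph to be the main obstacle: the delicate step is verifying that conditions (i) and (ii), once their common hypothesis is supplied by $v_{B}$ itself, collapse exactly to the failure of $g(A)\le_{P}g(B)$ (or, in the $\Delta$-trivial degenerate case, to $e_{B}<e_{A}$). The only machinery needed is additivity of each $ord_{k}$ and the definition of $g_{k}$, but one must track carefully which ordering each inequality refers to and confirm $g_{k}\ge 0$, so that the invariant genuinely lands in ${\bf N}^{p}$ and Dickson's lemma legitimately applies.
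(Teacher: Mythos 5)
Your proof is correct, and its core invariant is exactly the paper's: your gap vector $g_{k}(A)=ord_{k}u_{A}^{(k)}-ord_{k}v_{A}$ is the quantity $l_{ij}-k_{ij}$ in the paper's proof, and both arguments rest on the well-partial-order property of ${\bf N}^{k}$ (your Dickson's lemma, the paper's Lemma 5.2). The organization differs in a way worth noting. The paper splits into two cases: (a) infinitely many elements share a single $\sigma$-leader, handled by Lemma 5.2 on the tuples $(ord_{1}u^{(1)},\dots,ord_{p}u^{(p)})$ together with the degree argument from condition (ii) of Definition 5.1; and (b) infinitely many pairwise distinct, pairwise similar $\sigma$-leaders, where Lemma 5.3 yields a divisibility chain and the paper then assumes the consecutive quotients satisfy $\left(v_{B_{i+1}}/v_{B_{i}}\right)_{\Delta}\neq 1$, so that only condition (i) is invoked. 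Your single application of Dickson's lemma to the combined tuple $(P_{A},g(A),e_{A})\in{\bf N}^{m+n+p+1}$, after pigeonholing on the orthant and the indeterminate, treats everything at once; in particular it handles the sub-case the paper's dichotomy passes over, namely distinct $\sigma$-leaders with $v_{A}\,|\,v_{B}$ but $(v_{B}/v_{A})_{\Delta}=1$, where condition (ii) rather than (i) governs and the degree $e_{A}$ must be carried in the well-partial-order. (In the paper this is buried in the phrase ``we can assume,'' which would require a further subsequence extraction or precisely your degree bookkeeping to justify.) So your route buys a uniform argument that closes that case, at the cost of a larger product order; the steps you flagged as delicate --- $g_{k}\geq 0$ because $<_{k}$ compares $ord_{k}$ first, the additivity $ord_{i_{0}}\lambda=ord_{i_{0}}v_{B}-ord_{i_{0}}v_{A}$, and the equivalence of divisibility with the product order on $(k_{1},\dots,k_{m},|l_{1}|,\dots,|l_{n}|)$ once the orthant is fixed --- all check out.
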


\begin{proof}
Suppose that $\Sigma$ is an infinite autoreduced subset of $K\{y_{1},\dots, y_{s}\}$. Then  $\Sigma$ must contain an infinite set  $\Sigma'\subseteq \Sigma$ such that all $\Delta$-$\sigma$-polynomials from $\Sigma'$ have different $\sigma$-leaders similar to each other. Indeed, if it is not so, then there exists an infinite set $\Sigma_{1}\subseteq \Sigma$ such that all $\Delta$-$\sigma$-polynomials from $\Sigma_{1}$ have the same $\sigma$-leader $v$. By Lemma 5.2, the infinite set $\{(ord_{1}u_{A}^{(1)}, \dots, ord_{p}u_{A}^{(p)}) | A\in \Sigma_{1}\}$ contains a nondecreasing infinite sequence $$(ord_{1}u_{A_{1}}^{(1)}, \dots, ord_{p}u_{A_{1}}^{(p)}) \leq_{P}
(ord_{1}u_{A_{2}}^{(1)}, \dots, ord_{p}u_{A_{2}}^{(p)}) \leq_{P} \dots $$ ($A_{1}, A_{2}, \dots \in \Sigma_{1}$ and $\leq_{P}$ denotes the product order on ${\bf N}^{p}$). Since the sequence $\{deg_{v_{A_{i}}}A_{i} | i = 1, 2, \dots \}$ cannot be strictly decreasing, there are two indices $i$ and $j$ such that $i < j$ and $deg_{v_{A_{i}}}A_{i} \leq deg_{v_{A_{j}}}A_{j}$. We see that $A_{j}$ is not reduced with respect to  $A_{i}$ that contradicts the fact that $\Sigma$ is an autoreduced set.

Thus, we can assume that all $\Delta$-$\sigma$-polynomials of our infinite autoreduced set $\Sigma$ have distinct $\sigma$-leaders similar to each other. Using Lemma 5.3, we can assume that there exists an infinite  sequence $B_{1}, B_{2}, \dots $ \, of elements of $\Sigma$ such that
$v_{B_{i}} | v_{B_{i+1}}$  and $\left(\frac{v_{B_{i+1}}}{v_{B_{i}}}\right)_{\Delta} \neq 1$ for all $i = 1, 2, \dots $.
Let $k_{ij} = ord_{j}v_{B_{i}}$ and  $l_{ij} = ord_{j}u_{B_{i}}^{(j)}$ ($1\leq j\leq p$). Obviously, $l_{ij}\geq k_{ij}$ ($i = 1, 2,\dots ; j = 1,\dots, p$),
so that $\{(l_{i1}-k_{i1}, \dots, l_{ip}-k_{ip}) | i =1, 2, \dots \}\subseteq {\bf N}^{p}$. By Lemma 5.2, there exists an infinite sequence of indices $i_{1} < i_{2} < \dots $ such that $(l_{i_{1}1}-k_{i_{1}1},\dots, l_{i_{1}p}-k_{i_{1}p}) \leq_{P} (l_{i_{2}1}-k_{i_{2}1},\dots, l_{i_{2}p}-k_{i_{2}p}) \leq_{P} \dots $.
Then for any $j = 1,\dots, p$, we have $ord_{j}\,(\, \frac{v_{B_{i_{2}}}}{v_{B_{i_{1}}}}u_{B_{i_{1}}}^{(j)}) = k_{i_{2}j} - k_{i_{1}j} + l_{i_{1}j} \leq
k_{i_{2}j} + l_{i_{2}j} - k_{i_{2}j} = l_{i_{2}j} = ord_{j}u_{B_{i_{2}}}^{(j)}$, so that $B_{i_{2}}$ contains a term $\lambda v_{B_{i_{1}}} =  v_{B_{i_{2}}}$ such that  $\lambda_{\Delta}\neq 1$ and $ord_{j}(\lambda u_{B_{i_{1}}}^{(j)}) \leq ord_{j} u_{B_{i_{2}}}^{(j)}$ for $j = 1, \dots, p$. Thus, the $\Delta$-$\sigma$-polynomial $B_{i_{2}}$ is reduced with respect to $B_{i_{1}}$ that contradicts the fact that $\Sigma$ is an autoreduced set.
\end{proof}

Throughout the rest of the paper, while considering autoreduced sets in the ring $K\{y_{1},\dots, y_{s}\}$ we always assume that their elements are arranged in order of increasing rank. (Therefore, if we consider an autoreduced set of $\Delta$-$\sigma$-polynomials $\Sigma = \{A_{1},\dots, A_{d}\}$, then $rk\,A_{1}< \dots < rk\,A_{d}$).

\begin{proposition} Let $\Sigma = \{A_{1},\dots, A_{d}\}$ be an autoreduced set in the ring $K\{y_{1},\dots, y_{s}\}$ and let $I_{k}$ and $S_{k}$ ($1\leq k\leq d$) denote the initial and separant of $A_{k}$, respectively. Furthermore, let $I(\Sigma) = \{X\in K\{y_{1},\dots, y_{s}\}\,|\,X=1$ or $X$ is a product of finitely many elements of the form $\gamma(I_{k})$ and $\gamma'(S_{k})$ where $\gamma, \gamma'\in \Lambda_{\sigma}\}$. Then for any $\Delta$-$\sigma$-polynomial $B$, there exist $B_{0}\in K\{y_{1},\dots, y_{s}\}$ and $J\in I(\Sigma)$
such that $B_{0}$ is reduced with respect to $\Sigma$ and $JB\equiv B_{0} \,(mod [\Sigma])$ (that is, $JB-B_{0}\in [\Sigma]$).
\end{proposition}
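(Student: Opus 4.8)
The plan is to prove this by a standard reduction algorithm that mimics the Ritt--Kolchin reduction, but carried out in the difference-differential setting with respect to the $\sigma$-leaders and the multiple $p$-tuples of orders. The idea is to repeatedly eliminate from $B$ any term that witnesses $B$ being non-reduced with respect to some $A_k$, at each step multiplying by an appropriate initial or separant so that the leading contribution can be cancelled using $[\Sigma]$, and to argue that this process terminates.

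First I would set up the induction on rank. Given $B$, if $B$ is already reduced with respect to $\Sigma$ we take $B_0 = B$ and $J = 1$. Otherwise, $B$ fails one of the two conditions of Definition 5.4 with respect to some $A_k$. There are two cases to treat, corresponding to the two clauses of the definition. In the first case $B$ contains a term $\lambda v_{A_k}$ with $\lambda \sim v_{A_k}$, $\lambda_\Delta \neq 1$, and $ord_i(\lambda u_{A_k}^{(i)}) \leq ord_i u_B^{(i)}$ for all $i=1,\dots,p$. Here I would apply $\lambda$ to $A_k$: since $\lambda_\Delta \neq 1$, the operator $\lambda$ acts on $A_k$ so that its $\sigma$-leader becomes $\lambda v_{A_k}$ and the top-degree coefficient in $\lambda v_{A_k}$ is $\gamma(S_{A_k})$ for a suitable $\gamma \in \Lambda_\sigma$ (because differentiating past the leader brings out the separant). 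Multiplying $B$ by this separant factor and subtracting an appropriate $\Lambda$-multiple of $\lambda A_k$ lowers the occurrence of $\lambda v_{A_k}$ in $B$ while keeping everything congruent mod $[\Sigma]$. In the second case $B$ contains $\lambda v_{A_k}$ with $\lambda_\Delta = 1$, all the order inequalities hold, and $deg_{\lambda v_{A_k}} B \geq deg_{v_{A_k}} A_k$; here the leading coefficient in $\lambda v_{A_k}$ of $\lambda A_k$ is $\gamma(I_{A_k})$, so multiplying $B$ by that initial factor and subtracting a suitable multiple of $\lambda A_k$ strictly decreases $deg_{\lambda v_{A_k}} B$, again mod $[\Sigma]$. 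In both cases the multiplier introduced lies in $I(\Sigma)$, and products of such multipliers remain in $I(\Sigma)$, so the accumulated factor $J$ stays in $I(\Sigma)$ throughout.

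The main obstacle, and the heart of the argument, will be proving termination: I must choose a well-ordering on the relevant data so that each reduction step strictly decreases it. The natural choice is to order $\Delta$-$\sigma$-polynomials by the multiset of ranks of their terms (or, more precisely, by the $\sigma$-leader $v_B$, then by $deg_{v_B} B$, then by the $p$-tuple $(ord_1 u_B^{(1)},\dots,ord_p u_B^{(p)})$, lexicographically as in the definition of rank), and to verify that both reduction steps strictly lower this invariant. The delicate point is that in the first case the subtraction may introduce new terms, and one must check that these are all strictly lower in the chosen ordering than the term just eliminated, which requires carefully tracking how the orders $ord_i$ and $ord_\sigma$ behave under multiplication by $\lambda$ and under the elimination; the compatibility built into the orderings $<_1,\dots,<_p,<_\sigma$ in Section 5 is exactly what makes this work. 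Once strict descent is established, well-foundedness of the ordering guarantees the process halts after finitely many steps, yielding a $B_0$ reduced with respect to every $A_k$, i.e.\ reduced with respect to $\Sigma$, together with the required $J \in I(\Sigma)$ and the congruence $JB \equiv B_0 \pmod{[\Sigma]}$.
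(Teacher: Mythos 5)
Your plan follows the paper's proof almost exactly: a Ritt--Kolchin style reduction in which a violation of clause (i) of the definition of reducedness is removed by multiplying by $\lambda_{\sigma}(S_{k})$ and subtracting a multiple of $\lambda A_{k}$, a violation of clause (ii) is removed using $\lambda(I_{k})$, the accumulated multiplier stays in $I(\Sigma)$, and everything is done modulo $[\Sigma]$; you also correctly identify termination as the only substantive issue. The one point at which your plan would not go through as written is the terminating invariant. Ordering by $(v_{B}, deg_{v_{B}}B, ord_{1}u_{B}^{(1)},\dots, ord_{p}u_{B}^{(p)})$, i.e.\ by the rank of $B$, does not strictly decrease under a reduction step: the offending term $\lambda v_{A_{k}}$ that a step eliminates need not be the $\sigma$-leader of $B$ at all, so $v_{B}$ and $deg_{v_{B}}B$ can remain unchanged through arbitrarily many steps. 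The paper's repair, which stays entirely within your framework, is to introduce the \emph{$\Sigma$-leader} $w_{B}$ of $B$, namely the $<_{\sigma}$-greatest term of $B$ that actually witnesses failure of reducedness with respect to some $A_{i}$ (choosing $i$ so that $v_{A_{i}}$ is $<_{\sigma}$-maximal among the $\sigma$-leaders of $\Sigma$), to write $B = B'w_{B}^{r} + B''$ with $deg_{w_{B}}B'' < r$, and to verify that one step yields $B^{(1)}$ containing no $\Sigma$-leader greater than $w_{B}$ with respect to $<_{\sigma}$ and satisfying $deg_{w_{B}}B^{(1)} < r$; since $<_{\sigma}$ well-orders the terms, lexicographic descent on the pair $(w_{B}, deg_{w_{B}})$ forces termination. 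With that substitution for your proposed well-ordering, your argument is the paper's proof.
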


\begin{proof}
If $B$ is reduced with respect to $\Sigma$, the statement is obvious (one can set $B_{0}=B$). Suppose that $B$ is not reduced with respect to $\Sigma$.
Let $u^{(j)}_i$ and $v_i$ ($1\leq j\leq p,\, 1\leq i\leq d$) be the leaders of the element $A_i$ relative to the orders $<_{j}$ and $<_{\sigma}$, respectively.
In what follows, a term $w_H$, that appears in a $\Delta$-$\sigma$-polynomial $H\in R$, will be called a $\Sigma$-leader of $H$ if $w_H$ is the greatest (with respect to
$<_{\sigma}$) term among all terms $\lambda v_i$ ($\lambda\in\Lambda$,  $1\leq j\leq d$) such that $\lambda\sim v_{i}$, $\lambda v_{i}$ appears in $H$ and either $\lambda_{\Delta}\neq 1$ and $ord_{j}(\lambda u^{(j)}_{i})\leq ord_{j}u^{(j)}_H$ for $j=1,\dots, p$, or $\lambda_{\Delta} = 1$, $ord_{j}(\lambda u^{(j)}_{i})\leq ord_{j}u^{(j)}_H$ ($1\leq j\leq p$),
and $deg_{v_{i}}A_{i} \leq deg_{\lambda v_{i}}H$.

Let $w_B$ be the $\Sigma$-leader of $B$. Then $B = B'w_{B}^{r} + B''$ where $B'$ does not contain $w_B$ and $deg_{w_{B}}B'' < r$. Let $w_{B}=\lambda v_i$ for some $i$ ($1\leq i\leq d$) and for some $\lambda \in \Lambda$, $\lambda\sim v_i$, such that $ord_{j}(\lambda u^{(j)}_{i})\leq ord_{j}u^{(j)}_B$ for $j=1,\dots, p$. Without loss of generality we may assume that $i$ corresponds to the maximum (with respect to $<_\sigma$) $\sigma$-leader $v_i$ in the set of all $\sigma$-leaders of elements of $\Sigma$.

Suppose, first, that $\lambda_{\Delta}\neq 1$ (and $ord_{j}(\lambda u^{(j)}_{i})\leq ord_{j}u^{(j)}_B$ for $j=1,\dots, p$).
Then $\lambda_{\Delta}A_{i} - S_{i}\lambda_{\Delta} v_{i}$ has lower rank than $\lambda_{\Delta} v_{i}$, hence
$T = \lambda A_{i} - \lambda_{\sigma}(S_{i})\lambda v_{i}$ has lower rank than $\lambda v_{i} = w_{B}$. Also,
$(\lambda_{\sigma}(S_{i}))^{r}B = (\lambda_{\sigma}(S_{i})\lambda v_{i})^{r}B' + (\lambda_{\sigma}(S_{i}))^{r}B'' =
(\lambda A_{i} - T)^{r}B' + (\lambda_{\sigma}(S_{i}))^{r}B''$. Setting $B^{(1)} = B'(-T)^{r} + (\lambda_{\sigma}(S_{i}))^{r}B''$ we
obtain that $B^{(1)}\equiv B\, (mod [\Sigma])$, $B^{ (1)}$
does not contain any $\Sigma$-leader, which is greater than $w_{B}$ with respect to $<_{\sigma}$, and $deg_{w_{B}} B^{(1)} < r$.

Now let $\lambda_{\Delta} = 1$, $ord_{j}(\lambda u^{(j)}_{i})\leq ord_{j}u^{(j)}_B$ ($1\leq j\leq p$), and $r_{i} < r$ where $r_{i} = deg_{v_{i}}A_{i}$. Then the $\Delta$-$\sigma$-polynomial $(\lambda I_{i})B - w_{B}^{r-r_{i}}(\lambda A_{i})B'$ has all the properties of $B^{(1)}$ mentioned above. Repeating the described procedure,
we arrive at a desired $\Delta$-$\sigma$-polynomial $B_{0}$, which is reduced with respect to $\Sigma$ and satisfies the condition
$JB\equiv B_{0} \,(mod [\Sigma])$, where $J=1$ or $J$ is a product of finitely many elements of the form $\gamma(I_{k})$
and $\gamma'(S_{k})$  ($\gamma, \gamma'\in \Lambda_{\sigma}$).
\end{proof}

With the notation of the last proposition, we say that the $\Delta$-$\sigma$-polynomial $B$
{\em reduces to $B_{0}$} modulo $\Sigma$.

\begin{definition}
Let  $\Sigma = \{A_{1},\dots,A_{d}\}$ and
$\Sigma' = \{B_{1},\dots,B_{e}\}$ be two autoreduced sets in the ring of
$\Delta$-$\sigma$-polynomials  $K\{y_{1},\dots, y_{s}\}$. An autoreduced set
$\Sigma$ is said to have lower rank than $\Sigma'$ if one of the following
two cases holds:

{\em (1)} There exists $k\in {\bf N}$ such that $k\leq \min \{d,e\}$,
$rk\,A_{i}=rk\,B_{i}$ for $i=1,\dots,k-1$ and  $rk\,A_{k} < rk\,B_{k}$.

{\em (2)} $d>e$ and  $rk\,A_{i}=rk\,B_{i}$ for $i=1,\dots,e$.

If $d=e$ and $rk\,A_{i}=rk\,B_{i}$ for $i=1,\dots,d$, then $\Sigma$ is
said to have the same rank as $\Sigma'$.
\end{definition}

\begin{proposition}
In every nonempty family of autoreduced sets of difference-differential polynomials
there exists an autoreduced set of lowest rank.
\end{proposition}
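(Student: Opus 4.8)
The plan is to show that the rank ordering on autoreduced sets is well-founded, so that any nonempty family automatically contains a member of lowest rank. By Definition 5.5 the comparison of two autoreduced sets amounts to comparing, lexicographically, the increasing sequences of ranks of their elements, with the convention that once one set becomes a proper initial segment of the other the shorter one counts as larger. Everything therefore rests on two preliminary facts. \emph{First}, I would verify that the rank of a single $\Delta$-$\sigma$-polynomial lies in a well-ordered set: the $\sigma$-leader $v_A$ ranges over $\Lambda Y$ ordered by $<_{\sigma}$, and $<_{\sigma}$ is a well-ordering because for each bound $c$ there are only finitely many $\lambda\in\Lambda$ with $ord\,\lambda\leq c$, so any $<_{\sigma}$-descending sequence must stabilize; since $rk\,A=(v_A,\deg_{v_A}A,ord_1u_A^{(1)},\dots,ord_pu_A^{(p)})$ is a lexicographic tuple over $\Lambda Y$ and copies of ${\bf N}$, ranks are well-ordered. \emph{Second}, I would record the key observation that the property ``$B$ is reduced with respect to $A$'' (Definition 5.1) depends only on $B$ and on $rk\,A$: every quantity occurring in conditions (i) and (ii), namely $v_A$, $\deg_{v_A}A$, and $ord_i(\lambda u_A^{(i)})=ord_i\lambda+ord_iu_A^{(i)}$, is determined by $\lambda$ together with the components of $rk\,A$.

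With these in hand I would run a greedy construction. Put $\mathcal F_0=\mathcal F$, and suppose a nonempty $\mathcal F_{k-1}$ has been produced together with ranks $\rho_1<\dots<\rho_{k-1}$ such that every member of $\mathcal F_{k-1}$ has at least $k-1$ elements whose first $k-1$ ranks are exactly $\rho_1,\dots,\rho_{k-1}$. If every member of $\mathcal F_{k-1}$ has exactly $k-1$ elements, stop; otherwise let $\rho_k$ be the least rank of a $k$-th element occurring among members of $\mathcal F_{k-1}$ (it exists by the well-ordering of ranks), and let $\mathcal F_k$ be the set of members of $\mathcal F_{k-1}$ having at least $k$ elements whose $k$-th element has rank $\rho_k$. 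If the construction stops at some stage $r$, I would argue directly that any $\Sigma^{\ast}\in\mathcal F_r$ has lowest rank: an arbitrary $\Sigma\in\mathcal F$ either first deviates from $\rho_1,\rho_2,\dots$ at some position by having a strictly larger rank there (or by terminating early, which also counts as larger), in which case $\Sigma^{\ast}<\Sigma$, or it agrees through all of $\rho_1,\dots,\rho_r$, in which case it lies in $\mathcal F_r$ and, by the stopping condition, has exactly $r$ elements and hence the same rank as $\Sigma^{\ast}$.

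The remaining case, which I expect to be the \textbf{main obstacle}, is that the construction never stops, yielding an infinite chain $\rho_1<\rho_2<\cdots$ and nonempty $\mathcal F_1\supseteq\mathcal F_2\supseteq\cdots$. For each $k$ I would choose $\Sigma^{(k)}\in\mathcal F_k$ and let $A_k$ be its $k$-th element, so $rk\,A_k=\rho_k$. For $j<k$, since $\Sigma^{(k)}\in\mathcal F_k\subseteq\mathcal F_j$, its $j$-th element $C_j$ has rank $\rho_j$, and $A_k$ is reduced with respect to $C_j$ because $\Sigma^{(k)}$ is autoreduced; by the key observation (reducedness depends only on the rank of the second argument) $A_k$ is then reduced with respect to $A_j$. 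Thus the $A_1,A_2,\dots$ have strictly increasing ranks, and each $A_k$ is reduced with respect to every earlier $A_j$.

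The delicate point is that this infinite sequence need not be an autoreduced set: reducedness of a lower-rank element with respect to a higher-rank one is not controlled by the construction, so I cannot invoke Proposition 5.4 verbatim. The resolution is to observe that the proof of Proposition 5.4 in fact only uses reducedness of higher-rank elements with respect to lower-rank ones — indeed its contradiction is produced by exhibiting indices $i_1<i_2$ for which the higher element fails to be reduced with respect to the lower. Consequently that argument rules out precisely a sequence with strictly increasing ranks in which every element is reduced with respect to all earlier ones. Applying this to $A_1,A_2,\dots$ gives a contradiction, so the construction must stop, and the stopping case already exhibits a member of lowest rank. This completes the proof.
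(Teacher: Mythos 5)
Your proof is correct and follows essentially the same route as the paper: the descending chain $\mathcal F_0\supseteq\mathcal F_1\supseteq\cdots$ you build is exactly the paper's chain $\Phi_0\supseteq\Phi_1\supseteq\cdots$, and the infinite case is likewise dispatched by Proposition 5.4. The only difference is that you supply two justifications the paper leaves implicit --- that ranks are well-ordered (so ``lowest possible rank'' exists at each stage), and, more substantively, that the diagonal sequence $A_1,A_2,\dots$ is only guaranteed to have each term reduced with respect to the \emph{earlier} ones (since reducedness of $B$ with respect to $A$ depends on $A$ only through $v_A$, $\deg_{v_A}A$ and the $ord_iu_A^{(i)}$, but depends on $B$ in full), which you correctly repair by noting that the contradiction in the proof of Proposition 5.4 is always produced by a higher-rank element failing to be reduced with respect to a lower-rank one.
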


\begin{proof}
Let $\Phi$ be a nonempty family of autoreduced sets in the ring  $K\{y_{1},\dots, y_{s}\}$. Let us inductively define an infinite descending chain of subsets of $\Phi$ as follows:
$\Phi_{0}=\Phi$, $\Phi_{1}=\{\Sigma \in \Phi_{0} | \Sigma$ contains at least one element and the first element of $\Sigma$ is of lowest possible rank\}, \dots , $\Phi_{k}=\{\Sigma \in \Phi_{k-1} | \Sigma$ contains at least $k$ elements and the $k$th element of $\Sigma$ is of lowest possible rank\}, \dots . It is clear that if $A$ and $B$ are any two $\Delta$-$\sigma$-polynomials in the same set  $\Phi_{k}$, then $v_{A} = v_{B}$, $deg_{v_{A}}A =  deg_{v_{B}}B$, and $ord_{i}u_{A}^{(i)} = ord_{i}u_{B}^{(i)}$ for $i = 1,\dots, p$.
Therefore, if all sets $\Phi_{k}$ are nonempty, then the set \{$A_{k}|A_{k}$ is the $k$th element of some autoreduced set in $\Phi_{k}$\} would be an infinite autoreduced set, and this would contradict Proposition 5.4. Thus, there is the smallest positive integer $k$ such that $\Phi_{k}=\emptyset$. Clearly, every element of $\Phi_{k-1}$ is an autoreduced set  of
lowest rank in $\Phi$.
\end{proof}

Let $J$ be any ideal of the ring   $K\{y_{1},\dots, y_{s}\}$. Since the set of all autoreduced subsets of $J$ is not empty (if $A\in J$, then $\{A\}$ is an autoreduced subset of $J$), the last statement shows that  $J$ contains an autoreduced subset of lowest rank. Such an autoreduced set is called a {\em characteristic set} of the ideal $J$.

\begin{proposition}
Let $\Sigma = \{A_{1}, \dots , A_{d}\}$ be a characteristic set of a $\Delta$-$\sigma$-ideal
$J$ of the ring  $R = K\{y_{1},\dots, y_{s}\}$.  Then
an element $B\in R$ is reduced with respect to the set $\Sigma$ if and only
if $B = 0$.
\end{proposition}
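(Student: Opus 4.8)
The plan is to treat the two implications separately; the forward one is immediate, and all the work lies in the converse. For the \emph{if} part, observe that the zero polynomial contains no terms at all, so for every $A_{k}\in\Sigma$ both conditions of Definition 5.1 are vacuously met and $0$ is reduced with respect to $\Sigma$. For the converse I read the hypothesis as $B\in J$ (the proposition is a membership criterion for $J$), assume $B\neq 0$, and derive a contradiction with the minimality furnished by Proposition 5.7, i.e.\ with $\Sigma$ being an autoreduced subset of $J$ of lowest rank. The statement presupposes that $J$ is proper (otherwise $1\in J$ would be a reduced nonzero element), whence $J\cap K=\{0\}$, so $B\neq 0$ forces $B\notin K$; this rules out at the start the degenerate case of a nonzero constant.

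The engine of the argument is the comparison lemma that I would prove first: if $F,G\in K\{y_{1},\dots,y_{s}\}\setminus K$ satisfy $rk\,F<rk\,G$, then $F$ is reduced with respect to $G$. The proof splits according to the first coordinate at which the lexicographic comparison of $(v,\deg_{v},ord_{1}u^{(1)},\dots,ord_{p}u^{(p)})$ strictly drops. The key auxiliary fact is that for every $\lambda\in\Lambda$ with $\lambda\sim v_{G}$ one has $\lambda v_{G}\geq_{\sigma}v_{G}$, with strict inequality unless $\lambda=1$, because a nontrivial $\lambda$ strictly raises either $ord_{\sigma}$ or, failing that, $ord$, the first two keys of $<_{\sigma}$. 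Hence if $v_{F}<_{\sigma}v_{G}$ then every term of $F$ is $\leq_{\sigma}v_{F}<_{\sigma}v_{G}\leq_{\sigma}\lambda v_{G}$, so $F$ contains no multiple of $v_{G}$ and conditions (i), (ii) of Definition 5.1 hold vacuously. If $v_{F}=v_{G}$, the same fact shows that only the term $v_{G}$ itself (with $\lambda_{\Delta}=1$) can occur among the multiples of $v_{G}$ in $F$, so that only clause (ii) with $\lambda=1$ must be checked; here I would use the strict drop in $\deg_{v}$ (when $\deg_{v_{F}}F<\deg_{v_{G}}G$) or the strict drop in the tuple $(ord_{j}u^{(j)})$ (when the degrees agree) to supply exactly one of the two alternatives of Definition 5.1(ii).

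With the lemma available, I would assemble the contradicting set as follows. As $\Sigma$ is autoreduced its ranks are strictly increasing, $rk\,A_{1}<\dots<rk\,A_{d}$, and the equal-rank case of the reducedness conditions shows that no $A_{k}$ can satisfy $rk\,A_{k}=rk\,B$ (else $B$ would fail to be reduced with respect to that $A_{k}$). Thus the members of $\Sigma$ of rank below $rk\,B$ form an initial segment $A_{1},\dots,A_{p}$, and I set $\Sigma^{\ast}=\{A_{1},\dots,A_{p},B\}\subseteq J$. This set is autoreduced: the $A_{i}$ are mutually reduced as a subset of $\Sigma$; $B$ is reduced with respect to each $A_{i}$ by hypothesis; and each $A_{i}$ is reduced with respect to $B$ by the comparison lemma, since $rk\,A_{i}<rk\,B$. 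Comparing $\Sigma^{\ast}$ with $\Sigma$ through Definition 5.6 — case (1) with $k=p+1$ when $p<d$, since then $rk\,B<rk\,A_{p+1}$, and case (2) when $p=d$, since $\Sigma^{\ast}$ then has one more element while agreeing in rank with $\Sigma$ on $A_{1},\dots,A_{d}$ — shows that $\Sigma^{\ast}$ has strictly lower rank than $\Sigma$, contradicting the choice of $\Sigma$ as a characteristic set of $J$. Therefore $B=0$.

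I expect the comparison lemma to be the main obstacle: everything hinges on the monotonicity of $<_{\sigma}$ under multiplication by elements of $\Lambda$ and on matching each possible location of the rank drop to the correct clause of Definition 5.1, and this bookkeeping must be carried out carefully in the boundary case $v_{F}=v_{G}$. Once the lemma is in place, the construction of $\Sigma^{\ast}$ and the rank comparison via Definition 5.6 are routine.
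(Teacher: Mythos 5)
Your proposal is correct and takes essentially the same route as the paper: the paper's proof likewise adjoins $B$ to the initial segment of $\Sigma$ consisting of the elements of lower rank and contradicts the minimality of the characteristic set via Definition 5.6. You additionally supply the comparison lemma ($rk\,F<rk\,G$ implies $F$ is reduced with respect to $G$), the exclusion of the equal-rank case, and the treatment of nonzero constants --- all of which the paper leaves implicit --- and these details check out.
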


\begin{proof}  First of all, note that if $B\neq 0$ and $rk\,B < rk\,A_{1}$, then $rk\,\{B\} < rk\,\Sigma$ that contradicts the fact that $\Sigma$ is a characteristic set of the ideal
$J$. Let $rk\,B > rk\,A_{1}$ and let $A_{1},\dots, A_{j}$ ($1\leq j\leq d$) be all elements of $\Sigma$ whose rank is lower that the rank of $B$. Then $\Sigma' = \{A_{1},\dots, A_{j}, B\}$ is an autoreduced set of lower rank than $\Sigma$, contrary to the fact that $\Sigma$ is a characteristic set of $J$. Thus, $B = 0$.
\end{proof}
Since for any $\Delta$-$\sigma$-polynomial $A$ and any $\gamma\in \Lambda_{\sigma}$, $ord_{i}(\gamma A) = ord_{i}A$ for $i=1,\dots, p$, one can introduce the concept of a coherent autoreduced set of a linear $\Delta$-$\sigma$-ideal of $K\{y_{1},\dots, y_{s}\}$ (that is, a $\Delta$-$\sigma$-ideal generated by a finite set of linear $\Delta$-$\sigma$-polynomials) in the same way as it is defined in the case of difference polynomials (see ~\cite[Section 6.5]{KLMP}): an autoreduced set $\Sigma = \{A_{1},\dots, A_{d}\}\subseteq K\{y_{1},\dots, y_{s}\}$ consisting of linear $\Delta$-$\sigma$-polynomials is called {\em coherent} if it satisfies the following two conditions:

(i)\, $\lambda A_{i}$ reduces to zero modulo $\Sigma$ for any $\lambda\in \Lambda, \,1\leq i\leq d$.

(ii)\, If $v_{A_{i}}\sim v_{A_{j}}$ and $w = \lambda v_{A_{i}} = \lambda'v_{A_{j}}$, where $\lambda\sim\lambda'\sim v_{A_{i}}\sim v_{A_{j}}$,
then the $\Delta$-$\sigma$-polynomial $(\lambda'I_{A_{j}})(\lambda A_{i}) - (\lambda I_{A_{i}})(\lambda'A_{j})$ reduces to zero modulo $\Sigma$.

The following two propositions can be proved precisely in the same way as the corresponding statements for difference polynomials,
see ~\cite[Theorem 6.5.3 and Corollary 6.5.4]{KLMP}).

\begin{proposition}
Any characteristic set of a linear $\Delta$-$\sigma$-ideal of the ring of $\Delta$-$\sigma$-polynomials $K\{y_{1},\dots, y_{s}\}$
is a coherent autoreduced set. Conversely, if $\Sigma$ is a coherent autoreduced set in $K\{y_{1},\dots, y_{s}\}$ consisting of linear
$\Delta$-$\sigma$-polynomials, then $\Sigma$ is a characteristic set of the linear $\Delta$-$\sigma$-ideal $[\Sigma]$.
\end{proposition}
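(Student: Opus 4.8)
The plan is to prove the two implications separately, in each case transcribing the argument given for linear difference polynomials in \cite[Theorem 6.5.3 and Corollary 6.5.4]{KLMP} and adapting the bookkeeping to the several orders $<_{1},\dots, <_{p}, <_{\sigma}$ and the orders $ord_{i}$. The one feature of linearity that drives everything is this: if $A$ is a linear $\Delta$-$\sigma$-polynomial, then $v_{A}$ occurs in $A$ to degree one, so $I_{A}=S_{A}$ is a nonzero element of $K$. Hence every factor $\gamma(I_{k}),\gamma'(S_{k})$ ($\gamma,\gamma'\in\Lambda_{\sigma}$) of an element $J\in I(\Sigma)$ is a nonzero element of $K$, and therefore $J$ is a unit of $K\{y_{1},\dots,y_{s}\}$. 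Thus, for a linear ideal, the reduction of Proposition 5.5 becomes division with an invertible multiplier.

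First I would show that a characteristic set $\Sigma=\{A_{1},\dots,A_{d}\}$ of a linear $\Delta$-$\sigma$-ideal $J$ is coherent. For (i), fix $\lambda\in\Lambda$ and $i$; since $J$ is a $\Delta$-$\sigma$-ideal, $\lambda A_{i}\in J$. By Proposition 5.5 there are $J_{0}\in I(\Sigma)$ and $B_{0}$ reduced with respect to $\Sigma$ with $J_{0}(\lambda A_{i})\equiv B_{0}\pmod{[\Sigma]}$. Because $J_{0}$ is a unit and $[\Sigma]\subseteq J$, we get $B_{0}\in J$; being reduced with respect to the characteristic set $\Sigma$, Proposition 5.8 forces $B_{0}=0$, i.e.\ $\lambda A_{i}$ reduces to zero modulo $\Sigma$. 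For (ii), the coherence polynomial $(\lambda'I_{A_{j}})(\lambda A_{i})-(\lambda I_{A_{i}})(\lambda'A_{j})$ again lies in $J$, since it is an $R$-combination of $\lambda A_{i},\lambda'A_{j}\in J$; the same three lines give that it reduces to zero modulo $\Sigma$. Hence $\Sigma$ is coherent.

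For the converse, let $\Sigma=\{A_{1},\dots,A_{d}\}$ be a coherent autoreduced set of linear $\Delta$-$\sigma$-polynomials and put $J=[\Sigma]$. I would first record the standard converse of Proposition 5.8: an autoreduced set $\Sigma\subseteq J$ is a characteristic set of $J$ as soon as $J$ contains no nonzero polynomial reduced with respect to $\Sigma$. Indeed, if $\Sigma'=\{B_{1},\dots,B_{e}\}$ were an autoreduced subset of $J$ of strictly lower rank than $\Sigma$, then the first entry $B_{k}$ at which the ranks diverge (or $B_{d+1}$ in case (2) of Definition 5.6) lies in $J$, is nonzero, and is reduced with respect to $\Sigma$ — here one uses that $rk\,A_{i}=rk\,B_{i}$ makes the reducedness conditions of Definition 5.1 relative to $A_{i}$ and to $B_{i}$ literally identical, and that a term of strictly lower $<_{\sigma}$-leader cannot be a multiple of a later $v_{A_{i}}$ because multiplication by any $\lambda\in\Lambda$ is nondecreasing for $<_{\sigma}$. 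This contradiction reduces the problem to showing that $[\Sigma]$ has no nonzero element reduced with respect to $\Sigma$.

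The heart of the matter, and the step I expect to be the main obstacle, is this last claim. The plan is a minimal-representation argument: given $0\neq B\in[\Sigma]$, write $B=\sum_{k}C_{k}(\lambda_{k}A_{i_{k}})$ with $C_{k}\in R$, $\lambda_{k}\in\Lambda$, and choose the representation minimizing the highest term $w$ carried by the summands (with respect to $<_{\sigma}$), and then the number of indices attaining it. If two summands collide at a common leader $w=\lambda_{k}v_{i_{k}}=\lambda_{l}v_{i_{l}}$, coherence condition (ii) rewrites their top-order collision as an $R$-combination of operator-multiples of the $A_{i}$ with leaders strictly below $w$, while condition (i) lowers a single non-autoreduced summand; each step strictly decreases the chosen invariant, so in the minimal representation no cancellation at $w$ survives, which forces $w=v_{B}$ and exhibits in $B$ a term $\lambda v_{i}$ witnessing that $B$ is \emph{not} reduced with respect to $\Sigma$ — unless $B=0$. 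The delicate points, handled exactly as in \cite[Theorem 6.5.3]{KLMP}, are (a) checking that multiplication by $\lambda\in\Lambda$ respects all of $<_{1},\dots,<_{p},<_{\sigma}$ simultaneously, together with the orthant/similarity structure required for divisibility, and (b) verifying that condition (ii), stated only for overlapping $\sigma$-leaders, clears \emph{every} highest-leader collision — the analog of Buchberger's criterion. Linearity keeps this manageable, since all leading coefficients $\lambda_{k,\sigma}(S_{i_{k}})$ are units of $K$, so the collision of two summands at $w$ is governed solely by the coherence relation and the derivation (Leibniz) part of the operators introduces no new obstruction.
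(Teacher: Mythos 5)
Your proposal is correct and follows essentially the same route as the paper, which itself gives no argument beyond deferring to \cite[Theorem 6.5.3 and Corollary 6.5.4]{KLMP}: you transcribe that proof, and your key observation that linearity makes every initial, separant, and hence every multiplier $J\in I(\Sigma)$ a unit of $K$ is exactly what makes the adaptation go through. The forward direction as you give it is complete, and your sketch of the converse (reduction to the absence of nonzero reduced elements of $[\Sigma]$, then the minimal-representation/Buchberger-type argument using coherence) is the intended argument.
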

\begin{proposition}
Let us consider a partial order $\preccurlyeq$ on $K\{y_{1},\dots, y_{s}\}$ such that $A\preccurlyeq B$ if and only if $v_{A}|v_{B}$. Let $A$
be a linear $\Delta$-$\sigma$-polynomial in $K\{y_{1},\dots, y_{s}\}$, $A\notin K$. Then the set of all minimal with respect to $\preccurlyeq$
elements of the set $\{\lambda A\,|\,\lambda\in\Lambda\}$ is a characteristic set of the $\Delta$-$\sigma$-ideal $[A]$.
\end{proposition}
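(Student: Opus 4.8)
The plan is to verify the two defining conditions of a characteristic set directly against the structure provided by Proposition 5.9, which tells us that we only need to exhibit a \emph{coherent} autoreduced set consisting of a single linear generator's $\Lambda$-orbit, reduced to its minimal elements. Let $M$ denote the set of all $\preccurlyeq$-minimal elements of $\{\lambda A \mid \lambda \in \Lambda\}$, where $A \notin K$ is linear. First I would observe that $M$ is autoreduced: since $A$ is linear, every $\lambda A$ is linear, so $\deg_{v}(\lambda A) = 1$ for its $\sigma$-leader $v$, and the degree clause in Definition 5.1(ii) can never be triggered. Thus reducedness of one element $\lambda A$ with respect to another $\lambda' A$ reduces entirely to a statement about the orders $ord_j$ of the $k$-leaders versus the $\sigma$-leaders. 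By minimality with respect to $\preccurlyeq$, no $\sigma$-leader $v_{\lambda' A} = \lambda'_\sigma v_A$ divides another $v_{\lambda A}$ in a way that would produce a forbidden term, so the $\Delta$-part condition of Definition 5.1(i) is respected. This step should be routine once the linearity is used to kill the degree considerations.

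The substantive step is coherence. Condition (i) of coherence---that $\lambda A_i$ reduces to zero modulo $M$ for every $\lambda \in \Lambda$---I expect to follow almost tautologically from the construction: every $\lambda A$ is a $\Lambda$-multiple of $A$, hence lies above some minimal element $\mu A \in M$ in the order $\preccurlyeq$, and since $A$ is linear its initial $I_A$ is a $\Delta$-$\sigma$-polynomial free of $v_A$, so the reduction of $\lambda A$ by the appropriate element of $M$ produces a $\Delta$-$\sigma$-polynomial of strictly lower rank at each stage, terminating at $0$ by Proposition 5.6 applied within the linear ideal. Condition (ii), the coherence relation on overlapping $\sigma$-leaders, is where the linear structure again does the heavy lifting: when $w = \lambda v_{A_i} = \lambda' v_{A_j}$ with both elements being $\Lambda$-translates of the same $A$, the combination $(\lambda' I_{A_j})(\lambda A_i) - (\lambda I_{A_i})(\lambda' A_j)$ is a difference of two $\Lambda$-multiples of $A$ with matching leading terms; I would show the leading term $w$ cancels, leaving a $\Delta$-$\sigma$-polynomial that is again a $K$-linear combination of $\Lambda$-multiples of $A$ of strictly lower $<_\sigma$-rank, which then reduces to zero by condition (i).

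Having established that $M$ is a coherent autoreduced set of linear $\Delta$-$\sigma$-polynomials, I would invoke the converse direction of Proposition 5.9 to conclude immediately that $M$ is a characteristic set of $[M]$. It remains to identify $[M]$ with $[A]$: since each element of $M$ is a $\Lambda$-multiple of $A$ we have $M \subseteq [A]$, hence $[M] \subseteq [A]$; conversely $A$ itself lies in $[M]$ because $A$ is a $\Lambda$-multiple of some minimal element (indeed $A = \mathbf{1}\cdot A$ is minimal, so $A \in M$), giving $[A] \subseteq [M]$. Thus $[M] = [A]$ and $M$ is a characteristic set of $[A]$.

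The main obstacle I anticipate is the precise bookkeeping in coherence condition (ii): one must check that after the leading $w$ cancels, the residual terms are genuinely $\sigma$-lower and that the division relations $\lambda \sim \lambda' \sim v_{A_i} \sim v_{A_j}$ are compatible across distinct minimal translates, so that the reduction procedure of Proposition 5.6 actually applies. The similarity constraints (same orthant of $\mathbf{Z}^n$) must be tracked carefully, since $\Lambda$ contains the group $\Lambda_\sigma$ and translation by $\alpha_k^{-1}$ can move a term out of an orthant; linearity simplifies the algebra but the orthant-wise reduction still requires attention.
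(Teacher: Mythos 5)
Your overall strategy is the one the paper intends: the paper gives no proof of Proposition 5.10, deferring to \cite[Theorem 6.5.3 and Corollary 6.5.4]{KLMP}, and the route there is exactly yours --- show that the set $M$ of $\preccurlyeq$-minimal elements of $\{\lambda A\,|\,\lambda\in\Lambda\}$ is a coherent autoreduced set, invoke Proposition 5.9, and identify $[M]$ with $[A]$. However, one of your concrete steps is false: $A=1\cdot A$ need not be minimal. Take $\sigma=\{\alpha\}$ and $A=\alpha^{2}y+\alpha y+y$; then $v_{A}=\alpha^{2}y$, while $\alpha^{-1}A=\alpha y+y+\alpha^{-1}y$ has $\sigma$-leader $\alpha y$ (the tie between $\alpha y$ and $\alpha^{-1}y$ is broken by the components $l_{1},\dots,l_{n}$ of the tuple defining $<_{\sigma}$), and $\alpha y$ divides $\alpha^{2}y$, so $\alpha^{-1}A\prec A$ and $A\notin M$. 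The conclusion $[M]=[A]$ survives, but for a different reason: every minimal element has the form $\mu A$ with $\mu\in\Lambda_{\sigma}$ (multiplication by any $\delta_{i}$ preserves $<_{\sigma}$, so $v_{\delta_{i}B}=\delta_{i}v_{B}$ and $\delta_{i}B\succ B$), and $\Lambda_{\sigma}$ is a group, so $A=\mu^{-1}(\mu A)\in[M]$.

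The more substantive gaps are in the two verifications you call routine. Your autoreducedness argument rests on the identity $v_{\lambda' A}=\lambda'_{\sigma}v_{A}$, which is false: $<_{\sigma}$ compares $ord_{\sigma}$ and the $|l_{j}|$ before the $l_{j}$, so it is not invariant under translation by $\Lambda_{\sigma}$ (this is precisely why the orthants and the relation $\sim$ are introduced); already for $A=\delta_{1}y+\alpha y$ one has $v_{A}=\alpha y$ but $v_{\alpha^{-1}A}=\delta_{1}\alpha^{-1}y$. Moreover, Definition 5.1 constrains \emph{every} term of $\mu A$ against $v_{\mu' A}$, not only the leaders; since for linear polynomials the degree escape in 5.1(ii) is unavailable, and since $ord_{j}u_{\mu A}^{(j)}=ord_{j}u_{A}^{(j)}$ for all $\mu\in\Lambda_{\sigma}$ forces the order conditions to hold automatically, autoreducedness of $M$ amounts to the claim that no term of $\mu A$ is a similar $\Lambda_{\sigma}$-multiple of $v_{\mu' A}$ --- a statement that pairwise non-divisibility of the $\sigma$-leaders (all that minimality gives) does not by itself deliver. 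Likewise coherence (i) is not tautological: reduction modulo $M$ leaves a $K$-linear combination of $\Lambda$-translates of $A$ that is reduced with respect to $M$, and concluding that it vanishes requires showing that every \emph{nonzero} such combination contains a forbidden multiple of some $v_{\mu A}$ with $\mu A\in M$; your appeal to Proposition 5.6, and your reduction of coherence (ii) to coherence (i), both presuppose this. That claim is the actual content of the cited difference-polynomial proof, and as written your proposal asserts it rather than proves it.
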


Now we are ready to prove Theorem 3.1.

\begin{proof}
Let $L = K\langle\eta_{1},\dots, \eta_{s}\rangle$ be a $\Delta$-$\sigma$-field extension of $K$ generated by a finite set
$\eta = \{\eta_{1},\dots, \eta_{s}\}$.  Then there exists a natural $\Delta$-$\sigma$-homomorphism $\Upsilon_{\eta}$ of the ring of $\Delta$-$\sigma$-polynomials
$K\{y_{1},\dots, y_{s}\}$ onto the $\Delta$-$\sigma$-subring $K\{\eta_{1},\dots,\eta_{s}\}$ of $L$ such that $\Upsilon_{\eta}(a) = a$ for any
$a\in K$ and $\Upsilon_{\eta}(y_{j}) = \eta_{j}$ for $j = 1,\dots, s$. (If $A\in K\{y_{1},\dots, y_{s}\}$, then $\Upsilon_{\eta}(A)$ is called the {\em value\/} of $A$ at $\eta$;
it is denoted by $A(\eta)$.) Obviously, the kernel $P$ of the $\Delta$-$\sigma$-homomorphism  $\Upsilon_{\eta}$ is a prime $\Delta$-$\sigma$-ideal of
$K\{y_{1},\dots, y_{s}\}$. This ideal is called the {\em defining\/} ideal of $\eta$ over $K$ or the defining ideal of the extension $L = K\langle \eta_{1},\dots,\eta_{s}\rangle$.  It is easy to see that if the quotient field $Q$ of the factor ring $\bar{R} = K\{y_{1},\dots, y_{s}\}/P$ is considered as a $\Delta$-$\sigma$-field (where $\delta(\frac{f}{g}) =
\frac{g\delta(f)-f\delta(g)}{g^2}$ and $\tau(\frac{f}{g}) = \frac{\tau(f)}{\tau(g)}$ for any $f, g\in \bar{R}$,
$\delta\in \Delta,\, \tau\in \sigma^{\ast}$), then $Q$ is naturally $\Delta$-$\sigma$-isomorphic to the field $L$. The corresponding isomorphism  is identity on $K$ and maps
the images of the $\Delta$-$\sigma$-indeterminates $y_{1},\dots, y_{s}$ in the factor ring $\bar{R}$ to the elements $\eta_{1},\dots, \eta_{s}$, respectively.

Let  $\Sigma = \{A_{1},\dots, A_{d}\}$ be a characteristic set of the defining $\Delta$-$\sigma$-ideal $P$.
For any $r_{1},\dots, r_{p+1}\in {\bf N}$, let us set $U_{r_{1}\dots r_{p+1}} =
\{u\in \Lambda Y| ord_{i}u\leq r_{i}$ for $i = 1,\dots, p$, $ord_{\sigma}u\leq r_{p+1}$,  and either $u$ is
not a multiple of any $v_{A_{i}}$ or for every $\lambda\in\Lambda, A\in \Sigma$ such that
$u = \lambda v_{A}$ and $\lambda\sim v_{A}$, there exists $j\in \{1,\dots, p\}$
such that $ord_{j}(\lambda u_{A}^{(j)}) > r_{j}\}$.
We are going to show that the set $\bar{U}_{r_{1}\dots r_{p+1}} =
\{u(\eta)| u\in U_{r_{1}\dots r_{p+1}}\}$ is a transcendence basis of the
field $K(\D\bigcup_{j=1}^{n} \Lambda(r_{1},\dots, r_{p+1})\eta_{j})$ over $K$.

Let us show first that the set  $\bar{U}_{r_{1}\dots r_{p+1}}$ is algebraically independent over $K$. Let $g$ be a polynomial in $k$
variables ($k\in {\bf N},  k \geq 1$) such that $g(u_{1}(\eta),\dots, u_{k}(\eta)) = 0$ for some $u_{1},\dots, u_{k}\in U_{r_{1}\dots r_{p+1}}$. Then the $\Delta$-$\sigma$-polynomial $\bar{g} = g(u_{1},\dots, u_{k})$ is reduced with respect to $\Sigma$. (Indeed, if $g$ contains a term $u = \lambda v_{A_{i}}$  with $\lambda\in\Lambda,\, \lambda\sim v_{A_{i}}$ ($1\leq i\leq d$), then there exists $k\in \{1,\dots, p\}$ such that $ord_{k}(\lambda u_{A_{i}}^{(k)}) > r_{k}\geq ord_{k}u_{\bar{g}}^{(k)}$). Since $\bar{g}\in P$, Proposition 5.8  implies that $\bar{g} = 0$. Thus, the set $\bar{U}_{r_{1}\dots r_{p+1}}$ is algebraically independent over $K$.

Now, let us prove that every element $\lambda \eta_{j}$ ($1\leq j\leq s, \lambda \in \Lambda(r_{1},\dots, r_{p+1})$) is algebraic over the field $K(\bar{U}_{r_{1},\dots, r_{p+1}})$. Let $\lambda \eta_{j} \notin \bar{U}_{r_{1},\dots, r_{p+1}}$ (if $\lambda \eta_{j} \in \bar{U}_{r_{1},\dots, r_{p+1}}$, the statement is obvious). Then  $\lambda y_{j} \notin U_{r_{1},\dots, r_{p+1}}$ whence  $\lambda y_{j}$ is equal to some term  $\lambda'v_{A_{i}}$ where $\lambda'\in\Lambda$, $\lambda\sim v_{A_{i}}$ ($1\leq i\leq d$), and $ord_{k}(\lambda'u_{A_{i}}^{(k)})\leq r_{k}$ for $k = 1, \dots, p$. Let us represent $A_{i}$ as a polynomial in $v_{A_{i}}$: $A_{i} = I_{0}{(v_{A_{i}})}^{e} + I_{1}{(v_{A_{i}})}^{e-1} +\dots +
I_{e}$, where $I_{0}, I_{1},\dots I_{e}$ do not contain $v_{A_{i}}$ (therefore, all terms in these $\Delta$-$\sigma$-polynomials are lower than
$v_{A_{i}}$ with respect to  $<_{\sigma}$). Since $A_{i}\in P$,
\begin{equation}
A_{i}(\eta) =  I_{0}(\eta){(v_{A_{i}})(\eta)}^{e} +
I_{1}(\eta){(v_{A_{i}})(\eta)}^{e-1} +\dots + I_{e}(\eta) = 0
\end{equation}
It is easy to see that  the $\Delta$-$\sigma$-polynomials $I_{0}$ and $S_{A_{i}} = \partial A_{i}/\partial v_{A_{i}}$ are reduced with
respect to any element of the set $\Sigma$. Applying Proposition 5.8 we obtain that $I_{0}\notin P$ and $S_{A_{i}}\notin P$ whence $I_{0}(\eta) \neq 0$ and $S_{A_{i}}(\eta)\neq 0$. Now, if we apply $\lambda'$ to both sides of equation (5.1), the resulting equation will show that the element $\lambda'v_{A_{i}}(\eta) = \lambda\eta_{j}$ is algebraic over the field $K(\{\bar{\lambda}\eta_{l} | ord_{i}\bar{\lambda}\leq r_{i},\, ord_{\sigma}\bar{\lambda}\leq r_{p+1}$, for $i=1,\dots, p,  1\leq l\leq s$, and $\bar{\lambda}y_{l} <_{1} \lambda'u_{A_{i}}^{(1)} = \lambda y_{j}\})$.
Now, the induction on the set of terms $\Lambda Y$ ordered by $<_{\sigma}$ completes the proof of the fact that $\bar{U}_{r_{1}\dots r_{p+1}}(\eta)$ is a transcendence basis of the field $K(\D\bigcup_{j=1}^{s} \Lambda(r_{1},\dots, r_{p+1})\eta_{j})$ over $K$.

\smallskip

Let $U_{r_{1}\dots r_{p+1}}^{(1)} = \{u\in \Lambda Y | ord_{i}u\leq r_{i}$ for
$i = 1,\dots, p$, $ord_{\sigma}u\leq r_{p+1}$, and $u$ is not a multiple of any $v_{A_{j}}$, $j = 1,\dots, d\}$ and
let $U_{r_{1}\dots r_{p+1}}^{(2)} = \{u\in \Lambda Y | ord_{i}u\leq r_{i}$,  $ord_{\sigma}u\leq r_{p+1}$ for
$i = 1,\dots, p$ and there exists at least one pair $i, j$ ($1\leq i\leq p,
1\leq j\leq d$) such that $u = \lambda v_{A_{j}}$, $\lambda\sim v_{A_{j}}$,  and
$ord_{i}(\lambda u_{A_{j}}^{(i)}) > r_{i}\}$.  Clearly, $U_{r_{1}\dots r_{p+1}} =
U_{r_{1}\dots r_{p+1}}^{(1)}\bigcup U_{r_{1}\dots r_{p+1}}^{(2)}$ and
$U_{r_{1}\dots r_{p+1}}^{(1)}\bigcap U_{r_{1}\dots r_{p+1}}^{(2)} = \emptyset$.

By Theorem 4.6, there exists a numerical polynomial $\phi(t_{1},\dots, t_{p+1})$ in $p+1$ variables $t_{1},\dots, t_{p+1}$ such that $\phi(r_{1},\dots, r_{p+1}) = Card\,U_{r_{1}\dots r_{p+1}}^{(1)}$ for all sufficiently large $(r_{1},\dots, r_{p+1})\in {\bf N}^{p+1}$, $deg_{t_{i}}\phi \leq m_{i}$ ($1\leq i\leq p$), and $deg_{t_{p+1}}\phi\leq n$. Furthermore, repeating the arguments of the proof of theorem 4.1 of \cite{Levin3}, we obtain that there is a linear combination $\psi(t_{1},\dots, t_{p+1})$ of polynomials of the form (4.5) such that $\psi(r_{1},\dots, r_{p+1}) = Card\,U_{r_{1}\dots r_{p+1}}^{(2)}$ for all sufficiently large $(r_{1},\dots, r_{p+1})\in {\bf N}^{p+1}$. Then the polynomial
$\Phi_{\eta}(t_{1},\dots, t_{p+1}) = \phi(t_{1},\dots, t_{p+1}) + \psi(t_{1},\dots, t_{p+1})$ satisfies conditions (i) and (ii) of Theorem 3.1.

In order to prove the last part of the theorem, suppose that $\zeta = \{\zeta_{1},\dots, \zeta_{q}\}$ is another system of $\Delta$-$\sigma$-generators of $L/K$, that is,
$L = K\langle \eta_{1},\dots,\eta_{s}\rangle = K\langle \zeta_{1},\dots,\zeta_{q}\rangle$. Let $$\Phi_{\zeta}(t_{1},\dots, t_{p+1}) = \D\sum_{i_{1}=0}^{m_{1}}\dots  \D\sum_{i_{p}=0}^{m_{p}}\D\sum_{i_{p+1}=0}^{n}b_{i_{1}\dots i_{p+1}} {t_{1}+i_{1}\choose i_{1}}\dots {t_{p+1}+i_{p+1}\choose i_{p+1}}$$
be the dimension polynomial of our $\Delta$-$\sigma$-field  extension associated with the system of generators $\zeta$. Then there exist positive integers
$h_{1}, \dots, h_{p+1}$ such that $\eta_{i} \in K(\bigcup_{j=1}^{q}\Lambda(h_{1},\dots, h_{p+1})\zeta_{j})$ and $\zeta_{k} \in K(\bigcup_{j=1}^{s}\Lambda(h_{1},\dots,  h_{p+1})\eta_{j})$ for any $i=1,\dots, s$ and $k=1,\dots, q$, whence $\Phi_{\eta}(r_{1},\dots, r_{p+1}) \leq  \Phi_{\zeta}(r_{1}+h_{1},\dots, r_{p+1}+h_{p+1})$ and
$\Phi_{\zeta}(r_{1},\dots, r_{p+1}) \leq  \Phi_{\eta}(r_{1}+h_{1},\dots, r_{p+1}+h_{p+1})$ for all sufficiently large $(r_{1},\dots, r_{p+1})\in {\bf N}^{p+1}$. Now the statement of the third part of Theorem 3.1 follows from the fact that for any element $(k_{1},\dots, k_{p+1})\in E_{\eta}'$, the term ${t_{1}+k_{1}\choose k_{1}}\dots {t_{p+1}+k_{p+1}\choose k_{p+1}}$ appears in $\Phi_{\eta}(t_{1},\dots, t_{p+1})$ and $\Phi_{\zeta}(t_{1},\dots, t_{p+1})$ with the same coefficient $a_{k_{1}\dots k_{p+1}}$. The equality of the coefficients of the corresponding terms of total degree $d = deg\,\Phi_{\eta}$ in $\Phi_{\eta}$ and $\Phi_{\zeta}$ can be shown in the same way as in the proof of Theorem 3.3.21 of \cite{Levin4}.
\end{proof}

\begin{example}
{\em Let us find the $\Delta$-$\sigma$-dimension polynomial that expresses the strength of the difference-differential equation
\begin{equation}
\frac{\partial^{2} y(x_{1}, x_{2})}{\partial x_{1}^{2}} + \frac{\partial^{2} y(x_{1}, x_{2})}{\partial x_{2}^{2}} + y(x_{1} + h) + a(x)  = 0
\end{equation}
over some $\Delta$-$\sigma$-field of functions of two real variables $K$, where the basic set of derivations
$\Delta = \{\delta_{1} = \frac{\partial}{\partial x_{1}}, \delta_{2} = \frac{\partial}{\partial x_{2}}\}$ has the partition
$\Delta = \{\delta_{1}\}\bigcup \{\delta_{1}\}$ and $\sigma$ consists of one automorphisms $\alpha: f(x_{1}, x_{2})\mapsto f(x_{1}+h, x_{2})\}$
($h\in {\bf R}$).

In this case, the associated $\Delta$-$\sigma$-extension $K\langle\eta\rangle/K$ is $\Delta$-$\sigma$-isomorphic to the field of fractions of  $K\{y\}/[\alpha y + \delta_{1}^{2}y + \delta_{2}^{2}y + a]$ (the element $a\in K$ corresponds to the function $a(x)$).
Applying Proposition 5.10 we obtain that the characteristic set of the defining ideal of the corresponding $\Delta$-$\sigma$-extension
$K\langle\eta\rangle/K$ consists of the $\Delta$-$\sigma$-polynomials $g_{1} = \alpha y + \delta_{1}^{2}y + \delta_{2}^{2}y + a$ and
$g_{2} = \alpha^{-1}g_{1} = \alpha^{-1}\delta_{1}^{2}y + \alpha^{-1}\delta_{2}^{2}y + y + \alpha^{-1}(a)$.
With the notation of the proof of Theorem 3.1, the application of the procedure described in this proof, Theorem 4.6(iii), and formula (4.4)
leads to the following expressions for the numbers of elements of the sets $U_{r_{1}r_{2}r_{3}}^{(1)}$ and $U_{r_{1}r_{2}r_{3}}^{(2)}$:
$Card\,U_{r_{1}r_{2}r_{3}}^{(1)} = r_{1}r_{2}  + 2r_{2}r_{3} + r_{1} + r_{2} + 2r_{3} + 1$
and $Card\,U_{r_{1}r_{2}r_{3}}^{(2)} = 4r_{1}r_{3}  + 2r_{2}r_{3} - 2r_{3}$ for all sufficiently large $(r_{1}, r_{2}, r_{3})\in {\bf N}^{3}$.
Thus, the strength of equation (5.2) corresponding to the given partition of the basic set of derivations is expressed by the $\Delta$-$\sigma$-polynomial $$\Phi_{\eta}(t_{1}, t_{2}, t_{3}) = t_{1}t_{2} + 4t_{1}t_{3} + 4t_{2}t_{3} + t_{1} + t_{2} + 1.$$}
\end{example}

\begin{example} {\em Let $K$ be a difference-differential ($\Delta$-$\sigma$-) field where the basic set of derivations $\Delta =
\{\delta_{1}, \delta_{2}\}$ is considered together with its partition

\begin{equation}\Delta = \{\delta_{1}\}\bigcup \{\delta_{1}\}\end{equation}
and $\sigma = \{\alpha\}$ for some automorphism $\alpha$ of $K$. Let
$L = K\langle \eta \rangle$ be a $\Delta$-$\sigma$-field extension with the
defining equation
\begin{equation}\delta_{1}^{a}\delta_{2}^{b}\alpha^{c}\eta +
\delta_{1}^{a}\delta_{2}^{b}\alpha^{-c}\eta + \delta_{1}^{a}\delta_{2}^{b+c}\eta + \delta_{1}^{a+c}\delta_{2}^{b}\eta= 0
\end{equation}
where  $a$, $b$, and $c$ are positive integers. Let $\Phi_{\eta}(t_{1}, t_{2}, t_{3})$ denote the corresponding
difference-differential dimension polynomial (which expresses the strength of equation (5.4) with respect to the
given partition of the set of basic derivations $\Delta$). In order to compute $\Phi_{\eta}$, notice, first , that
the defining $\Delta$-$\sigma$-ideal $P$ of the extension $L/K$ is the linear $\Delta$-$\sigma$-ideal of $K\{y\}$
generated by the $\Delta$-$\sigma$-polynomial
$$f = \delta_{1}^{a}\delta_{2}^{b}\alpha^{c} y +
\delta_{1}^{a}\delta_{2}^{b}\alpha^{-c} y + \delta_{1}^{a}\delta_{2}^{b+c} y + \delta_{1}^{a+c}\delta_{2}^{b} y.$$
By Proposition 5.10, the characteristic set of the ideal $P$ consists of $f$ and
$$\alpha^{-1}f =  \alpha^{-(c+1)}\delta_{1}^{a}\delta_{2}^{b} y +
\delta_{1}^{a}\delta_{2}^{b}\alpha^{c-1} y + \delta_{1}^{a}\delta_{2}^{b+c}\alpha^{-1}y + \delta_{1}^{a+c}\delta_{2}^{b}\alpha^{-1}y.$$
The procedure described in the proof of Theorem 3.1 shows that $Card\,U_{r_{1}r_{2}r_{3}}^{(1)} = \phi_{A}(r_{1}, r_{2}, r_{3})$ for all
sufficiently large $(r_{1}, r_{2}, r_{3})\in {\bf N}^{3}$, where
$\phi_{A}(t_{1}, t_{2}, t_{3})$  is the dimension polynomial of the set
$A = \{(a, b, c),  (a, b, -(c+1)\,)\}\subseteq {\bf N}^{2}\times{\bf Z}$.
Applying Theorem 4.6(iii), and formula (4.4) we obtain that
$\phi_{A}(t_{1}, t_{2}, t_{3}) = 2ct_{1}t_{2} + 2bt_{1}t_{3} + 2at_{2}t_{3} + (b+2c-2bc)t_{1} +(a+2c-2ac)t_{2} + (2a+2b-2ab)t_{3} +
a+b+2c-ab-2ac-2bc+2abc$. The computation of $Card\,U_{r_{1}r_{2}r_{3}}^{(2)}$ with the use of the method of inclusion and exclusion described in the
proof of Theorem 3.1 yields the following:  $Card\,U_{r_{1}r_{2}r_{3}}^{(2)} = (2r_{3}-2c+1)[c(r_{2}-b+1) + c(r_{1}-a+1) - c^{2}]$ for all
sufficiently large $(r_{1}, r_{2}, r_{3})\in {\bf N}^{3}$. Therefore, the $\Delta$-$\sigma$-dimension polynomial of the extension $L/K$, which expresses the strength of equation (5.4), is as follows.
$$\Phi_{\eta}(t_{1}, t_{2}, t_{3}) = 2ct_{1}t_{2} + 2(b+c)t_{1}t_{3} + 2(a+c)t_{2}t_{3} + (b+3c-2bc-c^{2})t_{1}$$
$$+(2a+2b+4c -2ab - 2ac - 2bc - 2c^{2})t_{3} + a+b+4c-ab-3ac-3bc$$
\begin{equation}
+ (a+3c-2ac -2c^{2})t_{2} + +2abc +2ac^{2} + 2bc^{2} + 2c^{3} - 5c^{2}.\hspace{0.3in}
\end{equation}
The computation of the Kolchin-type univariate $\Delta$-$\sigma$-dimension polynomial (see Theorem 2.1) via the method of K\"ahler differentials described in
~\cite[Section 6.5]{KLMP} (by mimicking Example 6.5.6 of ~\cite{KLMP}) leads to the following result:
\begin{equation}
\phi_{\eta|K}(t) = {\D\frac{D}{2}}t^{2} - {\D\frac{D(D-2)}{2}}t + {\D\frac{D(D-1)(D-2)}{6}}
\end{equation}
where $D = a+b+c$. In this case the polynomial $\phi_{\eta|K}(t)$ carries just one invariant $a+b+c$ of the extension $L/K$ while
$\Phi_{\eta}(t_{1}, t_{2}, t_{3})$ determines three such invariants: $c,\, b+c$, and $a+c$ (see Theorem 3.1(iii)\,),
that is, $\Phi_{\eta}$ determines all three parameters $a, b, c$ of the defining equation while $\phi_{\eta}(t)$ gives just the sum of these parameters.

The extension $K\langle \zeta\rangle/K$ with a $\Delta$-$\sigma$-generator $\zeta$, the same basic set $\Delta\bigcup\sigma$ ($\Delta =
\{\delta_{1}, \delta_{2}\}$, $\sigma = \{\alpha\}$), the same partition of $\Delta$ and defining equation
\begin{equation}\delta_{1}^{a+b}\alpha^{c}\zeta + \delta_{2}^{a+b}\alpha^{-c}\zeta = 0
\end{equation}
has the same univariate difference-dimension polynomial (5.6). However, its $\Delta$-$\sigma$-dimension polynomial is not only different, but also has different invariants described in part (iii) of Theorem 3.1:
$$\Phi_{\zeta}(t_{1}, t_{2}, t_{3}) = 2ct_{1}t_{2} + 2(a+b)t_{1}t_{3} + 2(a+b)t_{2}t_{3} + At_{1} + Bt_{2} + Ct_{3} + E$$
where $A = B = (a+b)(1-2c) + 2c$,\, $C = 2[1 - (a+b-1)^{2}]$,\,  and \,$E = 1 + 2c(a+b-1)^{2}$.

Two systems of algebraic difference-differential ($\Delta$-$\sigma$-) equations with coefficients from a $\Delta$-$\sigma$-field $K$ are said to be {\em equivalent} if there is a $\Delta$-$\sigma$-isomorphism between the $\Delta$-$\sigma$-field extensions of $K$ with these defining equations, which is identity on $K$. Our example shows that using a partition of the basic set of derivations and the computation of the corresponding multivariate $\Delta$-$\sigma$-dimension polynomials, one can determine that two systems of $\Delta$-$\sigma$-equations (see systems (5.4) and (5.7)\,) are not equivalent, even though they have the same univariate difference-dimension polynomial.}
\end{example}
\bigskip

\noindent\Large {\bf Acknowledges}
\normalsize
\medskip

\noindent This research was supported by the NSF Grant CCF 1016608

\end{document}